\newcommand{\mydate}{{\today}}
\date{\mydate}
\newtheorem{theorem}{Theorem}
\newtheorem{proposition}{Proposition}
\newtheorem{lemma}{Lemma}
\newtheorem{remark*}{Remark}
\newcommand{\E}{\mathbb{E}}
\newcommand{\N}{\mathbb{N}}
\newcommand{\Pb}{\mathbb{P}}
\newcommand{\Z}{\mathbb{Z}}
\newcommand{\B}{\mathcal{B}}
\newcommand{\dd}{{\mathrm d}}
\renewcommand{\leq}{\leqslant}
\renewcommand{\geq}{\geqslant}
\renewcommand{\o}{\boldsymbol{o}}
\newcommand{\I}{\mathds{1}}
\begin{document}

\title{Non-equilibrium Phase Transitions: \\ Activated Random Walks at Criticality}
\author{M. Cabezas, L. T. Rolla, V. Sidoravicius \\ \small Instituto de Matem\'atica Pura e Aplicada and Universidad de Buenos Aires}
\date{}
\maketitle

\begin{abstract}
In this paper we present rigorous results on the critical behavior of the Activated Random Walk model.
We conjecture that on a general class of graphs, including~$\mathbb Z^d$, and under general initial conditions, the system at the critical point does not reach an absorbing state.
We prove this for the case where the sleep rate~$\lambda$ is infinite.
Moreover, for the one-dimensional asymmetric system, we identify the scaling limit of the flow through the origin at criticality.
The case $\lambda < + \infty$ remains largely open, with the exception of the one-dimensional totally-asymmetric case, for which it is known that there is no fixation at criticality.
\end{abstract}

This preprint has the same numbering of sections, equations and theorems as the the
published article
``\emph{J. Stat. Phys. 155 (2014), 1112-1125.}''

\section{Introduction}
\label{sec:intro}

In this work we investigate the behavior of the Activated Random Walk (ARW) model at the critical density of particles.
Along with the fixed-energy sandpiles with stochastic update rules, the ARW constitutes one of the paradigm  examples of conservative lattice gases which exhibit non-equilibrium phase transition from an active phase into infinitely many absorbing states. It is believed that the transitions in these models belong to an autonomous universality class of non-equilibrium phase transitions, the so-called Manna class. While the existence of such transition is broadly supported numerically, rigorously it is proven only for few particular cases.
Much less is known about the behavior of these systems at the critical point.
For comprehensive background and historical remarks we refer to~\cite{marro-dickman-99,dickman-rolla-sidoravicius-10,rolla-sidoravicius-12}, and for ongoing discussion on  the existence of an independent Manna class, see~\cite{basu-basu-bondyopadhyay-mohanty-hinrichsen-12,lee-13}.

Our goal here is to show that, at the critical density,
the system does not reach an absorbing state,
and thus each individual particle is in active state for infinitely many time intervals (although the density of active particles vanishes as $t\to\infty$).
We prove this for some particular cases, as discussed below.
We also compute the critical exponent and find the scaling limit for a one-dimensional model.

The ARW is defined as follows.
Initially, there are infinitely many particles spread over~$\Z^d$ with density $\mu$, e.g.\ i.i.d.\ Poisson with mean~$\mu$.
Particles can be in state $A$ for \emph{active} or $S$ for \emph{passive}, and at $t=0^-$ they are all active.
Each active particle, that is, each particle in the $A$ state, performs a continuous-time random walk with jump rate $D_A=1$ and with translation-invariant jump distribution.
Several active particles can be at the same site, and they do not interact among themselves.
When a particle is alone, it may become passive, a transition denoted by $A\to S$, which occurs at a \emph{sleeping rate} $\lambda>0$.
In other words, each particle carries two clocks, one for jumping and one for sleeping.
Once a particle is passive, it stops moving, i.e., it has jump rate $D_S=0$, and it remains passive until the instant when another particle is present at the same vertex.
At such an instant the particle which is in $S$ state flips to the $A$ state, giving the transition $A+S \to 2A$.
A particle in the $S$ state stands still forever if no other particle ever visits the vertex where it is located.
At the extreme case $\lambda=+\infty$, when a particle visits an empty site, it becomes passive instantaneously.
This case is thus essentially equivalent to \emph{internal diffusion-limited aggregation} with infinitely many sources.
For a more formal definition of the model, see~\cite[Section~2]{rolla-sidoravicius-12}.

In this paper we are mostly concerned with the question of fixation.
We say that the system \emph{fixates} if, for every finite box, there exists a random time after which there is no activity in that box.

The behavior of the ARW is expected to be the following.
For each~$0<\lambda<\infty$ there exists~$0<\mu_c<1$ such that, if the initial density~$\mu$ of particles satisfies $\mu<\mu_c$ the system fixates, and if $\mu>\mu_c$ the system does not fixate.
The critical density satisfies $\mu_c \to 0$ as $\lambda \to 0$ and $\mu_c \to 1$ as $\lambda \to \infty$.
The value of $\mu_c$ should not depend on the particular \mbox{$\mu$-}parametrized distribution of the initial configuration (geometric, Poisson, etc.).
At $\mu=\mu_c$, the density of active particles vanishes as $t\to\infty$, but we conjecture that the system does not fixate in this case.
The asymptotic decay of density of activity as $t \gg 1$ when $\mu=\mu_c$ should obey a power law.
Also, for the stationary regime, i.e., letting $t\to\infty$ first, the density of activity should decay with a power law as $0 < \mu-\mu_c \ll 1$.

However, from a mathematically rigorous point of view, all the above predictions are still open problems.
The few exceptions are presented in the next sections.

The only existing general result is for $d=1$, and states that $0 < \mu_c \leq 1$, but there is no proof that $\mu_c<1$.
Here we consider two cases: infinite sleep rate $\lambda=\infty$, and a one-dimensional system with totally asymmetric jumps.
These extreme cases share some but not all of the qualitative aspects of the ARW.

In Section~\ref{s:models} we define the particle-hole model and discuss its relation with the ARW.
We present two alternative constructions for these models, and describe the Abelian property.

In Section~\ref{sec:phasetransition} we present known and new results concerning fixation and non-fixation for the ARW.
For a quick glance the reader may look at the main statements.
We also comment on open problems.

In Section~\ref{sec:criticalflow} we study the scaling limit for the flow of particles in the asymmetric
one-dimensional particle-hole model, and finish with a brief discussion of this scaling limit.

The theorems presented here are valid for any dimension, any nearest-neighbor jump distribution, and any value of~$\lambda$, unless different assumptions are explicitly stated.

\section{Models, time-change, and fixation}
\label{s:models}

We start introducing two related models which will be helpful in the study of the ARW, and then discuss the Abelian property.

All the systems considered in this paper, including the ARW, are particle systems on~$\Z^d$.
The configuration~$(\eta_t(x) : x\in{\Z^d})$ denotes the state of the system at each site~$x$ at time~$t$.
The particle jumps are distributed as $p(x,x+y)=p(\o,y)=p(y)$, where $p(\cdot)$ is a probability function on $\Z^d$ and~$\o$ denotes the origin in~$\Z^d$.
It is assumed that the initial configuration~$\eta_0$ is i.i.d.~with finite mean~$\mu$ and non-constant.
For simplicity we also assume that~$p(y)=0$ unless~$y$ is one of the~$2d$ nearest-neighbors of~$\o$.

\emph{Particle-hole model.}
Each particle performs a continuous-time random walk at rate~$1$.
We refer to the sites not containing particles as \emph{holes}.
At the time when the particle visits an empty site, it \emph{settles}, i.e., it stops moving and stands still at that site forever after.
After that time the site becomes available for other particles to go through.
If a site is occupied by several particles at $t=0^-$, we choose one of them uniformly to fill the hole at $t=0$, and the other particles remain free to move.
In this setting, particles can be either \emph{unsettled} if they have never stepped on an unoccupied site, or otherwise they are \emph{settled} at some site if they have filled the corresponding hole.

\emph{Two-type annihilating random walks.}
There are two types of particles, $A$ and $B$. The particles evolve according to continuous-time random walks at rates $D_A=1$ and $D_B \geq 0$, respectively.
When two particles of different types meet, both are removed from the system.
If a particle meets several particles of the other type, it chooses one of them uniformly to annihilate.

For special choices of parameters, the above models and the ARW are closely related.

The connection between the particle-hole model and the two-type annihilating random walks is more evident.
For the latter, suppose that at $t=0^-$ every site contains one $B$-particle and that $D_B=0$, i.e., $B$-particles do not move.
If we identify $A$-particles at $t=0^-$ with unsettled particles, and $B$-particles at $t=0^-$ with holes, the evolutions of both systems will be identical.
At $t=0$, sites containing $A$-particles loose one $A$-particle, which is annihilated by the only $B$-particle present at $t=0^-$ (resp.\ one of the unsettled particles settles and fills the corresponding hole).
At positive times, each site~$x$ containing~$k$ particles of type~$A$ (resp.\ $k$ unsettled particles) sends a particle to $z=x+y$ at rate $k\cdot p(y)$.
If the target site $z$ still contains a $B$-particle (resp.\ has an unfilled hole), both particles annihilate each other (resp.\ the unsettled particle settles at $z$).

Now consider the ARW with $\lambda=\infty$.
Compared to the particle-hole model, sites containing~$0$ particles are equivalent to a hole, and sites containing~$1$ particle are equivalent to a site with one settled particle.
In both cases, particles are not sent to neighboring sites at any rate.
Sites~$x$ with~$k+1$ particles correspond to sites with one settled particle and $k \geq 1$ unsettled particles.
In this case, a particle is sent to a neighboring site at rate $k+1$ for the ARW and at rate~$k$ for the particle-hole model, and the target site is chosen as $z=x+y$ with probability $p(y)$.

The continuous-time evolution of the ARW and the particle-hole model are thus different.
However, due to the Abelian property of those models, fixation for the ARW with $\lambda=\infty$ is equivalent to fixation for the particle-hole model.

We now describe the Abelian property.
The reader can find the details in~\cite[Section~3]{rolla-sidoravicius-12}.
These models can be constructed explicitly in a number of ways.\footnote
{The constructions described here are well-defined even when the total number of particles in the system is infinite, see~\cite{cabezas-rolla-sidoravicius-13b,rolla-sidoravicius-12}.
Alternatively, one can approximate the probability of any event by a construction with only finitely many particles.}
One way, which we refer to as the \emph{particle-wise randomness}, is as follows.
First sample the number of particles present at each site at $t=0^-$, and then sample, for each particle, a continuous-time trajectory (with an extra rate-$\lambda$ Poisson clock to make the particle sleep in case $\lambda<\infty$).
A particle follows the corresponding trajectory until it settles or goes to sleep, and at such moment we stop progressing in both its trajectory and sleep clock.

Another construction, which we refer to as the \emph{site-wise randomness}, is as follows.
First sample the number of particles present at each site at $t=0^-$, and then sample, for each site~$x$, a sequence of instructions and a Poisson clock.
We now progress in the Poisson clock of each site~$x$ with a speed proportional to the number of active or unsettled particles present at~$x$.
Each time a new Poissonian mark if found in the clock of a given site~$x$, we \emph{topple site $x$}, that is, we perform the action indicated by the first unused instruction in the sequence assigned to~$x$.

The particle-wise randomness is especially useful for playing with particle addition and deletion in the particle-hole model.
The site-wise randomness, on the other hand, has the big advantage of decoupling the property of fixation from the order at which topplings are performed, and is thus robust with respect to the details of jump rates, etc.
In particular, one can combine the site-wise randomness to get equivalence between both models and the particle-wise randomness to prove fixation or non-fixation for one of them.

From now on we discuss combinatorial properties of the toppling operation, and conclude this section by stating the relationship between such properties and fixation for the continuous-time models.

A site~$x$ is \emph{unstable} in a given configuration~$\eta$ if~$x$ contains active or unsettled particles in that configuration, and \emph{stable} otherwise.
Let $\alpha$ denote finite sequences of sites in~$\Z^d$, which we think of as the order at which a sequence of topplings will be applied.
Toppling a site is \emph{legal} if the site is unstable, and a sequence $\alpha$ is said to be legal if each subsequent toppling is legal.
Let~$V$ denote finite subsets of~$\Z^d$. A configuration $\eta$ is said to be \emph{stable} in~$V$ if all the sites $x\in V$ are stable in $\eta$.
We say that \emph{$\alpha$ is contained in~$V$} if all its elements are in~$V$.
We say that \emph{$\alpha$ stabilizes $\eta$ in $V$} if every $x\in V$ is stable after performing the topplings indicated in~$\alpha$.
Let $m_\alpha(x) $ count the number of times that a site $x\in\Z^d$ appears in $\alpha$.

The main property of this construction is that the order at which topplings are performed is irrelevant.
In order to stabilize a configuration~$\eta$ in a box~$V$, the number of topplings performed at each site depends only on the sequences of instructions.

\begin{lemma}[Abelian Property]
\label{lemma2abelianproperty}
If~$\alpha$ and~$\beta$ are both legal toppling sequences for $\eta$ that are contained in~$V$ and stabilize~$\eta$ in~$V$, then $m_\alpha(x)=m_\beta(x)\ \forall x \in \Z^d$.
\end{lemma}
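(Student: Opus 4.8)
The plan is to prove the slightly stronger statement via a local exchange argument: any two legal stabilizing sequences can be transformed into one another by a finite number of ``swap'' moves that interchange two adjacent topplings at distinct sites, plus possibly deletions of redundant topplings, and none of these moves changes the toppling-count vector $(m_\alpha(x))_{x\in\Z^d}$. First I would record the elementary observation that topplings at two distinct sites $x\neq y$ \emph{commute}: if both $x$ and $y$ are unstable in a configuration $\xi$, then toppling $x$ then $y$ yields the same configuration as toppling $y$ then $x$, and in particular $y$ is still unstable after toppling $x$ (toppling $x$ only adds particles elsewhere, it never removes the unsettled/active particle that makes $y$ unstable — here one uses that instructions are ``additive'' in the particle count and that settling/sleeping a particle at $y$ is triggered by a mark in $y$'s own clock, not by topplings at $x$). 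This is the one genuinely model-specific input, and I would phrase it abstractly enough to cover all three systems (ARW, particle-hole, two-type).

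The core of the argument is the standard ``least action'' / diamond lemma induction on $\ell(\alpha)$, the length of $\alpha$. Suppose $\alpha=(x_1,x_2,\dots)$ and $\beta=(y_1,y_2,\dots)$ are both legal and stabilize $\eta$ in $V$. If $x_1=y_1$, topple that site once and apply the induction hypothesis to the remaining tails acting on the new configuration. If $x_1\neq y_1$, then since $\beta$ stabilizes $\eta$ in $V$ and $x_1$ is unstable in $\eta$ with $x_1\in V$, the site $x_1$ must be toppled at least once along $\beta$; let $y_j$ be its first occurrence. Using the commutation observation repeatedly, I would slide this toppling of $x_1$ leftward past $y_1,\dots,y_{j-1}$ (each of which is at a site $\neq x_1$, and each remains a legal toppling after the slide) to obtain a new legal sequence $\beta'$ with $m_{\beta'}=m_\beta$, the same first toppling $x_1$ as $\alpha$, and $\beta'$ still stabilizing $\eta$ in $V$. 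Now $\alpha$ and $\beta'$ agree in their first entry, and we reduce to the previous case. Strictly, to make the induction bottom out one runs it on $\min(\ell(\alpha),\ell(\beta))$ or, more cleanly, first proves the ``global'' version: if $\alpha$ is legal and stabilizes $\eta$ in $V$ and $\beta$ is any legal sequence for $\eta$ contained in $V$ (not necessarily stabilizing), then $m_\beta(x)\le m_\alpha(x)$ for all $x$, by the same sliding argument; applying this twice with the roles reversed gives equality.

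The step I expect to be the main obstacle is verifying the commutation property with full rigor once one accounts for the ``stop progressing the clock'' bookkeeping: when a particle settles at $y$ or goes to sleep there, the instruction sequence or clock at $y$ is effectively truncated, and one must check that toppling a different site $x$ in between never resurrects a stopped clock in a way that depends on order — equivalently, that the notion of ``unstable'' at $y$ (presence of an active/unsettled particle) is monotone under topplings at $x\neq y$. In the site-wise construction this is clean because instructions are deterministic functions of the configuration and the only coupling between sites is the transfer of particles, which is additive; I would cite \cite[Section~3]{rolla-sidoravicius-12} for the detailed verification that the enhanced (stopped-clock) dynamics still enjoys this monotonicity. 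Everything else — the sliding induction, the two-sided inequality, passing from the combinatorial statement to the stated form with two stabilizing sequences — is routine once commutation is in hand.
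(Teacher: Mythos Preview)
The paper does not actually prove this lemma: it states the Abelian property and sends the reader to \cite[Section~3]{rolla-sidoravicius-12} for the details. Your proposal is correct and is precisely the standard argument one finds there --- prove the least-action inequality ($m_\beta \le m_\alpha$ whenever $\beta$ is legal and $\alpha$ is legal and stabilizing, both contained in $V$) by sliding the first toppling of $\alpha$ to the front of $\beta$ and inducting, then symmetrize to get equality. The only model-specific ingredient, which you correctly isolate, is the monotonicity fact that toppling a site $x$ can never render a different site $y$ stable (in the site-wise construction a toppling at $x$ either sends a particle away from $x$ or issues a sleep instruction at $x$, so the number of active/unsettled particles at $y\ne x$ can only stay the same or increase); once this is in hand the combinatorial induction is routine. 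So your write-up would be a faithful expansion of what the paper leaves to the reference.
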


We can therefore define the random fields $m_{V} = m_\alpha$, which do not depend on the particular choice of~$\alpha$ that is legal and stabilizing for~$\eta$ in~$V$.
These fields depend on the randomness only through~$\eta$ and the sequences of instructions.

In particular, since a configuration~$\eta$ is stable in the ARW with $\lambda=\infty$ if and only if it is stable in the particle hole-model, $m_{V}$ is the same for both models.

\begin{lemma}[Monotonicity]
\label{lemma3generalmonotonicity}
If $V\subseteq V'$, then $m_{V}(x)\leqslant m_{V'}(x)\ \forall x\in\Z^d$.
\end{lemma}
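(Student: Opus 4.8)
The plan is to realise $m_V$ as the toppling count of a legal stabilising sequence that is a \emph{prefix} of a legal stabilising sequence realising $m_{V'}$, and then invoke Lemma~\ref{lemma2abelianproperty}. Fix $\eta$ and let $\alpha$ be a legal toppling sequence, contained in $V$, that stabilises $\eta$ in $V$; by the Abelian Property $m_\alpha(x)=m_V(x)$ for every $x$. Since $V\subseteq V'$, this same $\alpha$ is legal for $\eta$ and contained in $V'$; the only property that may be lost is stability in $V'$, because topplings near $\partial V$ may have created unstable sites in $V'\setminus V$. I would then extend $\alpha$ greedily: while some site of $V'$ is unstable in the current configuration, append a legal toppling of such a site. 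If this halts it yields $\alpha^\ast=\alpha\gamma$, which is legal, contained in $V'$, and stabilises $\eta$ in $V'$, so $m_{\alpha^\ast}(x)=m_{V'}(x)$ by the Abelian Property; since $\alpha$ is a prefix of $\alpha^\ast$, $m_V(x)=m_\alpha(x)\le m_{\alpha^\ast}(x)=m_{V'}(x)$, which is the claim.

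What remains is to see that the greedy extension halts, and for this I would use the \emph{least action principle}, the natural companion of the Abelian Property: if $\delta$ is a legal toppling sequence for $\eta$ contained in $V'$ and $\beta$ is any sequence that stabilises $\eta$ in $V'$, then $m_\delta(x)\le m_\beta(x)$ for all $x$. Since $m_{V'}$ is well defined, there is at least one legal stabilising $\beta$ for $\eta$ in $V'$, and the least action principle then bounds the length of every legal sequence contained in $V'$ by $\sum_{x\in V'}m_\beta(x)<\infty$, so the greedy run cannot continue forever. (In fact, once the least action principle is available it gives the lemma outright, via $m_V(x)=m_\alpha(x)\le m_\beta(x)=m_{V'}(x)$, and the extension step becomes unnecessary.)

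The least action principle I would prove by induction on the length of the legal sequence, writing $\delta=\delta'(y)$ and assuming $m_{\delta'}\le m_\beta$ pointwise; the crux is to exclude $m_{\delta'}(y)=m_\beta(y)$. If that equality held, then at $y$ both sequences would have used the same initial segment of $y$'s instruction list (the slot used by the $j$-th toppling of $y$ depends only on $j$), hence emitted the same amount from $y$, whereas $\beta$ would have received at $y$ at least as much as $\delta'$ did, since for every neighbour $z$ we have $m_\beta(z)\ge m_{\delta'}(z)$ and the amount dispatched from $z$ towards $y$ is nondecreasing in the number of topplings of $z$; as $y$ is unstable after $\delta'$ (because $\delta'(y)$ is legal), it would still be unstable after $\beta$, contradicting that $\beta$ stabilises $\eta$ in $V'\ni y$. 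The genuinely delicate ingredient here --- and where I expect the bulk of the work --- is the monotonicity of a single toppling in the number of particles present: extra arrivals at a site must never turn it from unstable to stable, nor alter the effect of an instruction that is already scheduled to fire, which for $\lambda<\infty$ means handling the sleep instructions carefully. This monotonicity is exactly the fact underlying the Abelian Property itself; granting it, the induction and the prefix argument above are routine.
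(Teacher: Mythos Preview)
Your argument is correct and is the standard one. Note, however, that the paper does not actually prove this lemma in-text: Lemmas~\ref{lemma2abelianproperty}--\ref{lemma4fixationstabilizable} are stated with details deferred to~\cite[Section~3]{rolla-sidoravicius-12}. The proof there proceeds exactly via the least action principle you invoke parenthetically: any legal sequence contained in $V'$ is dominated pointwise by any stabilising sequence contained in $V'$, so taking $\alpha$ legal and stabilising in $V\subseteq V'$ and $\beta$ legal and stabilising in $V'$ gives $m_V=m_\alpha\le m_\beta=m_{V'}$ directly, without the prefix-extension detour. Your inductive sketch of the least action principle, including the caveat about sleep instructions when $\lambda<\infty$, matches the treatment in that reference.
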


In particular, the limit $m = \lim_{n}m_{V_n}$ exists and does not depend on the particular sequence $V_n\uparrow\Z^d$.

\begin{lemma}
\label{lemma4fixationstabilizable}
For both the ARW and the particle-hole model,
with i.i.d.\ initial configuration,
\[
\Pb[\o \mbox{ is visited finitely often}]
=
\Pb[m(\o)<\infty]
=
0 \mbox{ or } 1
.
\]
In particular, fixation for the ARW with $\lambda=\infty$ is equivalent to fixation for the particle-hole model, since $m$ is the same for both models.
\end{lemma}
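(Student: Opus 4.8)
The plan is to prove the displayed chain of equalities together with the $\{0,1\}$ dichotomy in three steps, and then read off the equivalence of fixation for the two models. The first step is a purely combinatorial identity, valid for almost every realization:
\[
\{\o\text{ is visited infinitely often}\}=\{\o\text{ is toppled infinitely often}\}=\{m(\o)=\infty\},
\]
which is the complement of the two displayed events in the statement. The second equality follows from Lemmas~\ref{lemma2abelianproperty} and~\ref{lemma3generalmonotonicity} together with the equivalence between the site-wise and particle-wise constructions: in the site-wise construction the number of legal topplings performed at $\o$ while stabilizing inside $V_n$ is nondecreasing in $n$ with limit $m(\o)$, and this limit coincides with the total number of times $\o$ is toppled in the infinite-volume continuous-time evolution (see \cite{rolla-sidoravicius-12}). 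For the first equality, recall that settled particles in the particle-hole model and sleeping particles in the ARW never move; hence once $\o$ contains at least one particle it stays nonempty, $\o$ can be toppled again only after a fresh particle arrives, and since the initial configuration is a.s.\ finite at $\o$, infinitely many topplings force infinitely many visits. Conversely, every visit to $\o$ by a moving particle leaves $\o$ unstable — the only possible exception being the single visit that settles a particle at an initially empty $\o$ — and therefore eventually triggers a toppling, so infinitely many visits force infinitely many topplings.

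The second step is the dichotomy, which I would obtain by comparing $\{m(\o)<\infty\}$ with the event
\[
\mathcal F:=\{m(x)<\infty\text{ for every }x\in\Z^d\}
\]
of global fixation. The event $\mathcal F$ is invariant under the translations of $\Z^d$, and the randomness that determines $m$ — the i.i.d.\ field of initial occupation numbers together with the independent i.i.d.\ field of instruction sequences, the Poisson clocks being irrelevant — is ergodic under those translations; hence $\Pb[\mathcal F]\in\{0,1\}$. It then remains to show that $\{m(\o)<\infty\}$ and $\mathcal F$ differ by a null set, and since $\mathcal F\subseteq\{m(\o)<\infty\}$ is trivial, it suffices to prove that the random set $S:=\{x:m(x)=\infty\}$ is a.s.\ empty or all of $\Z^d$. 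The reasoning of the first step, applied at each site, yields two propagation rules for $S$: if $x\in S$ and $p(x,y)>0$ then $y\in S$, because the infinitely many topplings at $x$ send, along the i.i.d.\ instruction stream at $x$ in which the direction $y$ occurs with positive frequency, infinitely many particles to $y$; and if $y\in S$ then $x\in S$ for some $x$ with $p(x,y)>0$, because the infinitely many particles toppled at $y$, exceeding the a.s.\ finite initial supply at $y$, must keep arriving from a backward neighbour. When $p$ charges all $2d$ nearest neighbours the first rule alone shows that $S$ is a union of connected components of $\Z^d$, hence $\emptyset$ or $\Z^d$; in the asymmetric one-dimensional cases of interest the two rules together propagate membership in $S$ both to the right and to the left, with the same conclusion; for a general nearest-neighbour $p$ one closes the argument by combining the two propagation rules with translation ergodicity to exclude an intermediate $S$. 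I expect this comparison — turning the local propagation rules plus ergodicity into the statement $S\in\{\emptyset,\Z^d\}$ a.s. — to be the delicate point.

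Finally, the ``in particular'' assertion is immediate. It has already been observed that a configuration is stable for the ARW with $\lambda=\infty$ if and only if it is stable for the particle-hole model, so the fields $m_V$, and therefore $m=\lim_n m_{V_n}$, coincide for the two models; by the dichotomy just proved each model fixates if and only if $\Pb[m(\o)<\infty]=1$, and since the left-hand side is common to both, the two models fixate or fail to fixate simultaneously.
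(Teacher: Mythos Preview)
The paper does not prove this lemma; together with Lemmas~\ref{lemma2abelianproperty} and~\ref{lemma3generalmonotonicity} it is stated without proof, the surrounding text referring the reader to \cite[Section~3]{rolla-sidoravicius-12} for details. So there is no in-paper argument to compare against, and your proposal should be read as a reconstruction of that external proof.

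Your outline follows the standard route and is essentially correct: the identification of $\{m(\o)=\infty\}$ with infinitely many visits via the equivalence of the site-wise and continuous-time constructions (Step~1) is sound, and the ``in particular'' clause is indeed immediate once the dichotomy is in hand.

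The point you flag as delicate is the only real gap. For jump distributions that do not charge all $2d$ neighbours, forward propagation alone does not force $S\in\{\emptyset,\Z^d\}$, and the backward rule you state is combinatorially too weak: in $\Z^2$ with $p$ supported on $\{e_1,e_2\}$, the deterministic half-space $\{x_1\ge 0\}$ satisfies both rules. The missing ingredient is an entry-point counting argument. If $p(e_i)>0$, forward propagation gives $x\in S\Rightarrow x+e_i\in S$, so along each line in direction $e_i$ the set $S$ is a right half-line with at most one left endpoint; the events $\{x\in S,\ x-e_i\notin S\}$ are therefore disjoint along that line, and by translation invariance each has probability zero. Hence a.s.\ $x\in S\Rightarrow x-e_i\in S$ as well, so $S$ is invariant under $\pm e_i$. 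Running this over every coordinate $i$ with $p(e_i)+p(-e_i)>0$ yields $S\in\{\emptyset,\Z^d\}$ a.s., which closes the argument. With this step filled in, your proof is complete.
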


\section{Critical behavior of activated random walks}
\label{sec:phasetransition}

We start with an ``exactly solvable'' case, for which a more complete description can be derived.
The following result comes from discussions with C.~Hoffman.

\begin{theorem}
\label{thm:mu_c}
For the one-dimensional totally-asymmetric ARW, $\mu_c = \frac{\lambda}{1+\lambda}$.
Moreover, this system does not fixate at criticality.
\end{theorem}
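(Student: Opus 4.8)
The plan is to exploit the totally-asymmetric, one-dimensional structure to reduce the problem to a near-exact computation. First I would pass to the particle-hole / two-type annihilating picture and use the site-wise randomness to decouple fixation from the order of topplings. In $d=1$ with jumps only to the right, the evolution of the rightward flow has a renewal-like structure: particles released from sites $\leq 0$ either settle somewhere in $\{1,2,\dots\}$ or cross the origin, and the ARW sleep clock (rate $\lambda$) competes with the jump clock (rate $1$) exactly once per ``fresh'' site visited. So each time a lone active particle is the first to occupy an empty site, with probability $\frac{\lambda}{1+\lambda}$ it falls asleep and fills that hole, and with probability $\frac{1}{1+\lambda}$ it jumps on before sleeping. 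This gives a clean way to track how the stream of particles is thinned as it moves right.

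\textbf{Step 1 (upper bound $\mu_c \leq \frac{\lambda}{1+\lambda}$, i.e.\ non-fixation for $\mu>\frac{\lambda}{1+\lambda}$).} Consider the flux $F_n$ through the bond $(n,n+1)$, counting settled-or-passed particles. Using particle-wise randomness, I would set up a recursion: given the incoming flow, at site $n+1$ the first arriving particle fills the hole there with probability $\frac{\lambda}{1+\lambda}$ (it sleeps) and otherwise passes; combined with the particles born at site $n+1$ (mean $\mu$, i.i.d.), one gets $F_{n+1}$ in terms of $F_n$. Taking expectations, the drift of $\E[F_n]$ is governed by $\mu - \frac{\lambda}{1+\lambda}$ (roughly: each step loses at most one particle to sleeping, with probability $\frac{\lambda}{1+\lambda}$ when there is flow, and gains $\mu$ from the local source). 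If $\mu>\frac{\lambda}{1+\lambda}$, the flow $F_n$ has positive drift, hence $F_n\to\infty$ a.s., so infinitely many particles cross every bond and the origin is visited infinitely often — non-fixation, and in particular non-fixation will persist down to criticality by the monotone coupling in $\mu$.

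\textbf{Step 2 (lower bound $\mu_c \geq \frac{\lambda}{1+\lambda}$, fixation for $\mu<\frac{\lambda}{1+\lambda}$).} Here I would run the same accounting but now the flow process is a random walk (or a branching-like process) with \emph{negative} drift, so it hits $0$ a.s.; once the flux through some bond is exhausted and stays $0$, the sites to its left have stabilized, and iterating over a sequence of bonds tending to $-\infty$ shows every finite box fixates. The totally-asymmetric direction is what makes this decoupling exact: activity never travels back left.

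\textbf{Step 3 (non-fixation \emph{at} criticality).} This is the delicate part and I expect it to be the main obstacle. At $\mu=\frac{\lambda}{1+\lambda}$ the flux process is critical — zero drift — so one must rule out the possibility that it drifts to $0$. The idea is that the increments of $\E[F_n]$ are not merely zero-mean but the process is a (time-inhomogeneous) martingale or submartingale with fluctuations of order $\sqrt n$, bounded below by $0$; a critical nonnegative martingale with nondegenerate increments is recurrent/unbounded rather than eventually absorbed, unless $0$ is genuinely absorbing. The key point to rule out absorption is that whenever the local flow momentarily vanishes at a bond, the i.i.d.\ source at the next site (being non-constant, with positive probability of producing $\geq 2$ particles) restarts a positive flow; so $0$ is not absorbing for the spatial flux chain and the chain cannot fixate. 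Combining the martingale fluctuation lower bound with this ``restart'' property gives $F_n\to\infty$ along a subsequence in probability, hence (by a zero-one argument, Lemma~\ref{lemma4fixationstabilizable}) $m(\o)=\infty$ a.s. I would make ``critical flux is unbounded'' precise either via a direct second-moment / Kesten--Stigum-type estimate on the branching structure of the flow, or by identifying the scaling limit of $F_n$ (which is anyway done in Section~\ref{sec:criticalflow}) and noting the limit process is a.s.\ nonzero. The hard part is controlling the dependence between the thinning at successive sites carefully enough to get a genuine lower bound on the fluctuations rather than just a zero-mean statement.
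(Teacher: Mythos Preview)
Your strategy is essentially the paper's: reduce the question to a one-dimensional reflected random walk with increment law $\eta_0(\cdot)-Y$, where $Y$ is Bernoulli$(\frac{\lambda}{1+\lambda})$, and read off the three regimes from the sign of the drift $\mu-\frac{\lambda}{1+\lambda}$. The paper makes this reduction very cleanly by stabilizing sites one at a time from $-L$ rightward: if $N_{i-1}$ particles have arrived at $-L+i$, then after fully toppling that site the number sent on is $N_i=[N_{i-1}+\eta_0(-L+i)-Y_i]^+$, and the crucial observation is that $Y_i$ (the indicator that the \emph{last} particle at that site ends up passive) is a fresh Bernoulli$(\frac{\lambda}{1+\lambda})$ coin, independent of everything else. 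This is slightly sharper than your ``first particle to occupy an empty site sleeps with probability $\frac{\lambda}{1+\lambda}$'' picture, which mixes ARW and particle-hole language (the particle-hole/annihilating equivalence is only for $\lambda=\infty$) and does not by itself yield i.i.d.\ increments.

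Where you diverge most from the paper is Step~3. You treat the critical case as the ``delicate part'' and reach for martingale fluctuation bounds, Kesten--Stigum, or the scaling limit of Section~\ref{sec:criticalflow}. None of that is needed. Once you have that $(N_i)_i$ is a random walk reflected at~$0$ with centered, non-degenerate increments, it is null-recurrent, and a null-recurrent reflected walk satisfies $N_L\to\infty$ in probability as $L\to\infty$. By Lemma~\ref{lemma4fixationstabilizable} this immediately gives non-fixation at $\mu=\frac{\lambda}{1+\lambda}$. Your worry that ``$0$ might be absorbing'' is already handled by the reflection: the chain leaves $0$ whenever $\eta_0(\cdot)-Y>0$, which has positive probability since $\eta_0$ is non-constant. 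So the ``hard part'' you anticipate dissolves once the i.i.d.\ increment structure is nailed down; the real content of the proof is getting that structure, and there the paper's site-by-site toppling with the last-particle Bernoulli is the clean formulation you want.
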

\begin{proof}
We know from Lemma~\ref{lemma4fixationstabilizable} that fixation is equivalent to
\[
\Pb[m(\boldsymbol o)\geq k] \to 0
\quad
\mbox{ as }
\quad
k\to\infty
,
\]
or alternatively
\[
\Pb[\mbox{more than $k$ particles ever jump out of $\o$}] \to 0
\quad
\mbox{ as }
\quad
k\to\infty
.
\]
Fix some $L\in\N$.
Let the site $x=-L$ topple until it is stable, and denote by $Y_0$ the indicator of the event that the last particle remained passive on $x=-L$.
Conditioned on $\eta_0(-L)$, the distribution of $Y_0$ is Bernoulli with parameter $\frac{\lambda}{1+\lambda}$ (in case $\eta_0(-L)=0$, sample $Y_0$ independently of anything else). Define $N_0$ be the number of particles which jump from $x=-L$  to $x=-L+1$, that is $N_0:=[\eta_0(-L)-Y_0]^+$.
Note that, after stabilizing $x=-L$, there are $N_0+\eta_0(-L+1)$ particles at $x=-L+1$.
Let the site $x=-L+1$ topple until it is stable, and denote by $Y_1$ the indicator of the event that the last particle remained passive on $x=-L+1$.
Again, conditioned on $\eta_0(-L)$, $\eta_0(-L+1)$, and $Y_0$, the distribution of $Y_1$ is Bernoulli with parameter $\frac{\lambda}{1+\lambda}$ (in case $N_0+\eta_0(-L+1)=0$, sample $Y_1$ independently of anything else).
Let $N_1$ be the the number of particles which jump from $x=-L+1$ to $x=-L+2$, i.e., $N_1:=[N_0+\eta_0(-L+1)-Y_1]^+$.
By iterating this procedure, the number $N_i$ of particles which jump from $x=-L+i$ to $x=-L+i+1$ after stabilizing $x=-L,\dots,-L-i$ is given by $N_i=[N_{i-1}+\eta_0(-L+i)-Y_i]^+$.
Note that the process $(N_i)_{i=0,1,\dots,L}$ is a random walk with independent jumps distributed as $\eta(x)-Y$, reflected at~$0$.

Now observe that $\E[\eta(-L+k)-Y_k]=\mu - \frac{\lambda}{1+\lambda}$.
If this quantity is positive, the reflected random walk is transient, and~$\Pb[N_{L} \geq \frac{1}{2}(\mu - \frac{\lambda}{1+\lambda})L]\to 1$ as $L\to\infty$, which, by Lemma~\ref{lemma4fixationstabilizable}, implies non-fixation.
On the other hand, if $\mu - \frac{\lambda}{1+\lambda}<0$, the reflected random walk is positive recurrent, and as $L\to\infty$, $N_{L}$ converges in distribution to a finite random variable, which implies tightness of $N_{L}$.
Therefore, by Lemma~\ref{lemma4fixationstabilizable},  we have fixation.

Finally, at criticality $\E[\eta(-L+k)-Y_k]=0$.
Then the reflected random walk $(N_i)_{i\geq0}$ defined above is null-recurrent and, as $L\to\infty$, $N_{L}$ converges in probability to $+\infty$, which implies non-fixation.
\end{proof}

The above theorem provides good support for the predictions discussed in Section~\ref{sec:intro}.
We now turn our attention to more general results about fixation.

\begin{theorem}
[\cite{rolla-sidoravicius-12}]
\label{thm:rsphasetransition}
For $d=1$ and $\mu < \frac{\lambda}{1+\lambda}$, the ARW fixates.
\end{theorem}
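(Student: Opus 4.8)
The plan is to deduce fixation from tightness of the odometer at the origin, and to get that tightness by decomposing the odometer into single‑particle contributions via the Abelian property. By Lemma~\ref{lemma4fixationstabilizable} the ARW fixates iff $m(\o)<\infty$ almost surely, where $m=\lim_n m_{V_n}$ along $V_n=\{-n,\dots,n\}$; since $m_{V_n}(\o)\uparrow m(\o)$ by Lemma~\ref{lemma3generalmonotonicity}, it suffices to show $\sup_n\E[m_{V_n}(\o)]<\infty$. First I would pass to the site‑wise representation and add the particles of $\eta_0$ inside $V_n$ one at a time, fully stabilizing in $V_n$ after each insertion; by a standard consequence of the Abelian property (Lemma~\ref{lemma2abelianproperty}) the accumulated odometer is again $m_{V_n}$, so $m_{V_n}(\o)$ is a sum of nonnegative terms, one per inserted particle, the term for a particle started at $x$ counting the topplings at $\o$ produced by the restabilization that follows its insertion.

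Next I would estimate that per‑insertion term. A lone active particle sitting on a site that is empty in the current stable configuration falls asleep before making its next jump with probability exactly $\frac{\lambda}{1+\lambda}$ — the probability that the rate‑$\lambda$ sleep clock rings before the rate‑$1$ jump clock — which is the mechanism behind the Bernoulli$(\frac{\lambda}{1+\lambda})$ variables in the proof of Theorem~\ref{thm:mu_c}. Following the restabilization as a cascade of active particles (each splitting off a new active particle when it steps onto an occupied site, and disappearing when it falls asleep on an empty one), and using that in one dimension the set of sites ever carrying an active particle during a single restabilization is an interval containing the insertion site $x$, the cascade can topple $\o$ only if it spans the whole distance $|x|$. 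If the ambient occupied‑site density stays below $\frac{\lambda}{1+\lambda}$ the cascade is subcritical, so it spans distance $|x|$ — and a fortiori visits $\o$ — with expected count $\le Cr^{|x|}$ for some $C<\infty$, $r<1$. Summing over insertions gives $\E[m_{V_n}(\o)]\le\sum_{x\in V_n}\E[\eta_0(x)]\,Cr^{|x|}\le C\mu\sum_{x\in\Z}r^{|x|}$, finite and uniform in $n$, whence fixation.

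The hard part is the clause ``if the ambient occupied‑site density stays below $\frac{\lambda}{1+\lambda}$''. The stable configurations seen during one‑at‑a‑time insertion are not the i.i.d.$(\mu)$ environment and could a priori contain long fully occupied stretches, in which the cascade is instead supercritical; ruling those out — and doing so with the sharp constant $\frac{\lambda}{1+\lambda}$, rather than with the weaker threshold a naive independent‑branching bound produces — is the real content. I would attack it by a mass‑accounting/large‑deviation argument (the mass inserted into $V_n$ is of order $2n\mu<2n\cdot\frac{\lambda}{1+\lambda}$ and any local excess must leak across $\partial V_n$, so overfull windows are exponentially rare uniformly in $n$), or, alternatively, by choosing an insertion order under which the ambient configuration near the origin stays stochastically dominated by the i.i.d.$(\mu)$ law; a further possibility, closer to the mechanism of Theorem~\ref{thm:mu_c}, is to work on a half‑line and control the leftward and rightward particle fluxes across the boundary bond directly, comparing them with a reflected random walk of drift $\mu-\frac{\lambda}{1+\lambda}$ whose positive recurrence for $\mu<\frac{\lambda}{1+\lambda}$ yields tightness at once.
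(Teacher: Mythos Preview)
This theorem is not proved in the present paper; it is quoted from~\cite{rolla-sidoravicius-12} and stated without proof. So there is no ``paper's own proof'' to compare against here, only the original argument in that reference.

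Your proposal is not a proof but a sketch with a self-identified gap, and that gap is the entire content of the theorem. The reduction to $\sup_n \E[m_{V_n}(\o)]<\infty$ and the one-at-a-time insertion via Lemma~\ref{lemma2abelianproperty} are fine. What fails is the per-insertion bound $Cr^{|x|}$: the stable configurations produced by successive insertions are highly correlated $\{0,1\}$-valued fields, and nothing you wrote prevents them from containing long fully-occupied blocks near the origin. Inside such a block the cascade is \emph{supercritical} regardless of the global density, so the branching heuristic gives nothing. Your three proposed fixes do not close this: the mass-accounting idea controls averages, not the absence of long occupied stretches in the intermediate stable states; no insertion order is exhibited under which the ambient configuration is dominated by i.i.d.$(\mu)$; and the reflected-random-walk comparison is exactly the mechanism of Theorem~\ref{thm:mu_c}, which requires total asymmetry and does not extend to general nearest-neighbour jumps.

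For context, the argument in~\cite{rolla-sidoravicius-12} proceeds quite differently. It works in the site-wise representation and does not attempt to bound the odometer in expectation. Instead it runs an exploration that reveals instructions one at a time and uses the first \emph{sleep} instruction at each site as a ``trap''; since a fresh instruction is a sleep with probability $\frac{\lambda}{1+\lambda}$, the trap density exceeds the particle density precisely when $\mu<\frac{\lambda}{1+\lambda}$, and a one-dimensional matching of particles to traps shows that with positive probability the box $[-n,n]$ can be stabilized without ever toppling the origin. This sidesteps entirely the problem of controlling intermediate stable configurations, which is where your approach stalls.
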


Theorems~\ref{thm:mu_c} and~\ref{thm:rsphasetransition} are the only available results for finite~$\lambda$.
The problem of fixation for some $\mu>0$ and some $\lambda<\infty$ is still wide open in higher dimensions.
In the sequel we consider~$\lambda=\infty$.

\begin{theorem}
[\cite{shellef-10}]
For $\lambda=\infty$ and $\mu$ small enough, the ARW fixates.
\end{theorem}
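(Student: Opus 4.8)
\textbf{Step 1 (reduction).} The plan is to use Lemma~\ref{lemma4fixationstabilizable}, which says that fixation for the ARW with $\lambda=\infty$ is equivalent to fixation for the particle-hole model and that both amount to $\Pb[m(\o)<\infty]=1$; since this probability is $0$ or $1$, it would suffice to show $\Pb[m(\o)\ge1]<1$ once $\mu$ lies below a suitable threshold. Because $m(\o)=\lim_n m_{V_n}(\o)$ is an increasing limit (Lemma~\ref{lemma3generalmonotonicity}), one has $\Pb[m(\o)\ge1]=\lim_n\Pb[m_{V_n}(\o)\ge1]$, so it is enough to bound, uniformly in $n$, the probability that the origin is toppled while stabilizing $\eta_0$ in the box $V_n$. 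I would work in the particle-hole model throughout.

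\textbf{Step 2 (the origin topples only if it becomes occupied, plus a first moment).} Using the Abelian property (Lemma~\ref{lemma2abelianproperty}) in its particle-wise form, stabilize $V_n$ by releasing the particles of $\eta_0$ sitting in $V_n$ one at a time, in any fixed order, each performing its random walk until the first time it steps on a site not currently occupied by a settled particle, where it settles. In the particle-hole model a settled site stays settled, so the set of occupied sites only grows; call its final value $S$. The origin is toppled exactly once for each passage of a moving particle through it while it is occupied, so $\{m_{V_n}(\o)\ge1\}\subseteq\{\o\in S\}$, and $\o\in S$ forces either $\eta_0(\o)\ge1$ or the random-walk excursion of some particle to reach $\o$. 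Writing $q(x)$ for a bound on the probability that a random walk from $x$ visits $\o$ before leaving the occupied set (which at each instant is contained in $S$), a first-moment estimate gives
\[
\Pb[m_{V_n}(\o)\ge1]\ \le\ \Pb[\o\in S]\ \le\ \sum_{x\in V_n}\E[\eta_0(x)]\,q(x)\ \le\ \mu+\mu\sum_{x\neq\o}q(x),
\]
where the last step uses $q(\o)\le1$ and $\E[\eta_0(x)]=\mu$.

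\textbf{Step 3 (confinement of the excursions).} The key input is that the occupied environment is sparse: $|S\cap V_n|$ is at most the total number of particles $\sum_{x\in V_n}\eta_0(x)$, hence has density $\mu(1+o(1))$, while the sites occupied at time $0^-$ form an i.i.d.\ field of density $\Pb[\eta_0(\o)\ge1]\le\mu$. For $\mu$ small this should render the occupied region subcritical in the relevant percolation sense, so that a random walk cannot travel far inside it: one expects $q(x)\le Ce^{-c|x|}$ with $c=c(\mu)\to\infty$ as $\mu\to0$ (in dimension $d\ge2$ even faster, since the walk visits many distinct sites). Then $\mu\sum_{x\neq\o}q(x)=:C'(\mu)\to0$ as $\mu\to0$, so $\Pb[m_{V_n}(\o)\ge1]\le\mu+C'(\mu)<1$ for $\mu$ small, uniformly in $n$, and Step~1 concludes.

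\textbf{Main obstacle.} The hard part is Step~3: the occupied set $S$ is produced by the dynamics and is strongly correlated, so knowing only that it has density $\mu(1+o(1))$ does not by itself give the confinement estimate --- a priori $S$ could develop dense clusters or long thin filaments that a random walk might follow. Making this rigorous would require either a stochastic-domination argument coupling $S$ with an honestly i.i.d.\ low-density field, or an inductive control of the successive excursions that keeps track of how few holes have been consumed, or an $L^1$/mass-transport argument proving an averaged version of the bound over $x\in V_n$ and then transferring it to the origin by approximate translation invariance away from $\partial V_n$. I expect this estimate to be the technical core of the proof.
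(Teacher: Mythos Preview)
The paper does not give its own proof of this statement; it is quoted as a result of Shellef. Immediately afterwards, however, the paper proves the \emph{stronger} Theorem~\ref{thm:fixationbelowone} (fixation for every $\mu<1$, not only small $\mu$) by a short mass-transport argument, and that is the proof you should compare against.

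Your route and the paper's are genuinely different. You try to show that, when $\mu$ is small, a moving particle cannot reach the origin because it must travel through the occupied set $S$, which you hope is ``subcritical''; the heart of the matter is then the confinement estimate in Step~3, which --- as you yourself note --- you do not actually prove, because $S$ is produced by the dynamics and is highly correlated. The paper sidesteps this difficulty entirely. Instead of controlling the geometry of $S$, it uses mass conservation: by translation invariance, the density of holes filled by time~$t$ equals the density of particles that have settled by time~$t$, which is at most $\mu$. Hence $\Pb[\o\text{ contains an unfilled hole at time }t]\ge 1-\mu>0$ for all~$t$, so with positive probability the origin is visited only finitely often, and Lemma~\ref{lemma4fixationstabilizable} upgrades this to fixation. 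This argument is both shorter and stronger: it needs no smallness of~$\mu$, no confinement of excursions, and no analysis of the random set~$S$.

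Regarding your proposal itself: Steps~1--2 are reasonable, and the inclusion $\{m_{V_n}(\o)\ge1\}\subseteq\{\o\in S\}$ is correct. But Step~3 is not a proof; it is a wish list. None of the three mechanisms you suggest (stochastic domination of $S$ by i.i.d., inductive control of excursions, or an averaged mass-transport bound) is carried out, and each has real obstacles --- in particular, $S$ need not be dominated by a product field, and the ``averaged'' version is essentially the paper's mass-transport idea in disguise, which, once you see it, makes the rest of your machinery unnecessary. So as written the proposal is an outline with its key step missing, whereas the paper's argument is complete and yields a sharper conclusion.
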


Using a mass-conservation argument we push this result to a sharp estimate.

\begin{theorem}
\label{thm:fixationbelowone}
For $\lambda=\infty$ and $\mu<1$, the ARW fixates.
\end{theorem}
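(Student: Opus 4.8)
The plan is to use the mass-conservation (mass-transport) principle together with the particle-hole model, which by Lemma~\ref{lemma4fixationstabilizable} has the same fixation behavior as the ARW with $\lambda=\infty$. The key observation is that in the particle-hole model, each site can host \emph{at most one} settled particle, since a particle settles precisely when it fills an empty site and thereafter that site is no longer a hole. So if the system fixates, then in the final configuration every site is either empty or occupied by exactly one settled particle, and the density of settled particles is at most~$1$. Since $\mu<1$, there should be ``enough holes'' to absorb all the mass, so fixation is at least consistent with mass conservation; the real content is to show it actually happens.

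First I would set up the site-wise construction on a large box $V_n \uparrow \Z^d$ and recall that $m = \lim_n m_{V_n}$, with fixation equivalent to $\Pb[m(\o) < \infty] = 1$ by Lemmas~\ref{lemma3generalmonotonicity} and~\ref{lemma4fixationstabilizable}. Next I would invoke the mass-transport principle: consider stabilizing the configuration in $V_n$ (using particle-wise randomness so that particles retain identities), and let each unsettled particle that ultimately settles send unit mass from its starting site to its settling site. Because the dynamics are translation-invariant and the initial configuration is i.i.d., the expected mass sent out of $\o$ equals the expected mass received at $\o$. The expected mass received at the origin is at most $\Pb[\o \text{ ends up occupied by a settled particle}] \leq 1$ (in fact, this is the limiting density of occupied sites). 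The expected mass sent out of $\o$ is at least $\E[\eta_0(\o)] - \Pb[\o \text{ settles without moving, or keeps one particle}]$, but more usefully: the expected number of particles that start at $\o$ and leave the box unsettled, or never settle, is controlled by $\mu$ minus the settled density. One gets $\mu \leq (\text{density of occupied sites in the stabilization}) + (\text{density of mass that escapes to infinity / never settles})$, and when $\mu < 1$ the plan is to argue the escaping term is zero, forcing all particles to settle, hence $m(\o) < \infty$ a.s.

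More carefully, the cleanest route is probably: fix the box $V_n$, stabilize inside it, and observe that the number of particles that exit $V_n$ is at most $|\partial V_n| \cdot (\text{something bounded})$ — no, that is false in general. Instead, use that after stabilization in $V_n$, the number of settled particles inside $V_n$ is at most $|V_n|$, while the total number of particles started in $V_n$ is $\approx \mu |V_n| < |V_n|$; hence a positive fraction of sites remain holes, and by a counting/ergodic argument combined with monotonicity of $m_{V_n}$, one shows the flow through any fixed site stays tight as $n \to \infty$. Concretely, let $W_n$ be the number of particles that are unsettled when the stabilization of $V_n$ terminates (these are particles at the boundary that ``want'' to leave); translation-invariance and $\mu < 1$ should give $\E[m_{V_n}(\o)] \leq \frac{1}{|V_n|} \E[\sum_{x \in V_n} m_{V_n}(x)]$, and $\sum_x m_{V_n}(x)$ equals total displacement, bounded by (total particles)$\times$(typical settling distance) — this is where the argument needs the real work.

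The main obstacle I expect is making the escaping-mass / boundary-flow term vanish rigorously: a priori, even with $\mu<1$, particles could in principle wander off to infinity without ever settling (this is exactly what happens for $\mu>1$ and is the crux of non-fixation there). The resolution must use that when $\mu<1$ the density of holes stays bounded away from $0$, so by a recurrence-type or mass-transport argument an unsettled particle a.s.\ eventually hits a hole. Quantifying this — showing $\Pb[\text{a given particle never settles}] = 0$ when $\mu<1$ — is the heart of the proof, and the mass-transport principle is the natural tool: if a positive density of particles never settled, the outgoing mass from $\o$ would strictly exceed the incoming mass (which is $\le 1$), contradicting conservation once we check the boundary contribution is negligible by taking $V_n \uparrow \Z^d$. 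Closing that contradiction cleanly is the one step I would budget the most care for.
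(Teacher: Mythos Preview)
Your proposal has a genuine gap in its logical structure, and it also misses a much simpler route that the paper takes.

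The gap: you aim to deduce fixation from the statement ``every particle eventually settles.'' But that implication is not immediate. Even if each individual particle a.s.\ settles, a fixed site could still be visited by infinitely many distinct particles, each of which settles somewhere else later; nothing in your outline rules this out, and your attempted bounds on $\sum_x m_{V_n}(x)$ via ``total displacement'' or ``boundary flow'' do not close without a further argument. So the step ``all particles settle $\Rightarrow m(\o)<\infty$ a.s.'' is exactly the missing idea, and your finite-box computations do not supply it. (Incidentally, ``every particle settles'' is the content of Proposition~\ref{prop:particlesfixate} in the paper, but it is proved \emph{assuming} fixation, not used to establish it.)

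The paper applies mass-transport in the opposite direction, and this is what makes the proof short. Work in the infinite-volume continuous-time particle-hole model. For fixed $t$, let $w(x,y)$ be the indicator that a particle starting at $x$ has settled at $y$ by time $t$. Then $\sum_y w(y,x)$ is the indicator that the hole at $x$ is filled by time $t$, while $\E\big[\sum_y w(x,y)\big]\le \E[\eta_0(x)]=\mu$. Translation-invariance gives
\[
\Pb[\text{the hole at }\o\text{ is filled by time }t]\le \mu,
\]
hence $\Pb[\o\text{ has an unfilled hole at time }t]\ge 1-\mu>0$. Let $t\to\infty$. In the particle-hole model a site with an unfilled hole is a site that has \emph{never been visited} (the first visitor would settle there). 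Thus $\Pb[\o\text{ is never visited}]\ge 1-\mu>0$, and the $0$--$1$ law in Lemma~\ref{lemma4fixationstabilizable} upgrades this to fixation. No finite-box stabilization, no boundary terms, and no need to control where particles end up --- the argument tracks holes, not particles.
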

\begin{proof}
The proof makes use of the spatially ergodic, continuous-time evolution of the particle-hole model.
We follow the idea introduced in~\cite{cabezas-rolla-sidoravicius-13b}.

We claim that for $\mu<1$ some holes are never filled, as a consequence of the mass-transport principle.
Indeed, the density of holes that are filled by time~$t$ equals the density of particles settled by time~$t$, and thus for any $t\geq 0$
\[ \Pb[\boldsymbol{o} \mbox{ contains an unfilled hole at time } t] \geq 1-\mu>0. \]
To see that, let $A(x,y)$ denote the event that a particle starting at~$x$ settles at~$y$ by time~$t$, and let $w(x,y)=\I_{A(x,y)}$.
Then $\sum_y w(x,y)$ equals the number of particles starting at $x$ that have settled by time $t$, and $\sum_y w(y,x)$ is the indicator that the hole at $x$ is filled by time~$t$.
Translation-invariance implies that $\E[\sum_y w(x,y)]=\E[\sum_y w(y,x)]$, and therefore the probability of the latter event is bounded from above by the density of particles at $t=0^-$, yielding the above inequality.
Finally, letting~$t\to\infty$ we see that some holes are never filled, proving the claim.

Therefore, the probability that $\o$ is visited finitely many times in the particle-hole model is positive, and finally by Lemma~\ref{lemma4fixationstabilizable} the ARW with $\lambda=\infty$ fixates.
\end{proof}

From now on we consider results on non-fixation.
All the known approaches work for $\lambda=\infty$ and, by monotonicity, imply non-fixation for any~$\lambda$.
With the exception of Theorem~\ref{thm:mu_c}, proving non-fixation for some $\lambda>0$ and some $\mu<1$ is still an open problem, in any dimension.

\begin{theorem}
[\cite{shellef-10,amir-gurelgurevich-10}]
\label{thm:supercriticalnon-fixation}
For $\mu>1$ the ARW does not fixate.
\end{theorem}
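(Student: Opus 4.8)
The plan is to reduce to the particle-hole model and then run, in reverse, the mass-transport bookkeeping that proved Theorem~\ref{thm:fixationbelowone}. By monotonicity (Lemma~\ref{lemma3generalmonotonicity}) it suffices to prove non-fixation for $\lambda=\infty$, and by Lemma~\ref{lemma4fixationstabilizable} this is equivalent to non-fixation of the particle-hole model, which is the object I would work with. Assume for contradiction that this model fixates. Then $\Pb[m(\o)<\infty]=1$, and by translation invariance and a countable intersection, almost surely every site is toppled only finitely often; passing to the continuous-time dynamics, this means almost surely every finite box eventually contains no unsettled particle, since an unsettled particle sits on an unstable site, which would topple.

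The engine is the ``one particle per hole'' observation already used for Theorem~\ref{thm:fixationbelowone}: distinct settled particles occupy distinct sites, so in any box $V$ the number of settled particles is at most $|V|$; by translation invariance, the density of particles that are settled at time $t$ is at most $1$ for every $t$. Combined with conservation of mass and translation invariance, which force the expected number of particles at $\o$ at time $t$ to equal $\mu$, this gives
\[
\E\bigl[\#\{\text{unsettled particles at }\o\text{ at time }t\}\bigr]\ \ge\ \mu-1\ >\ 0
\qquad\text{for every }t\ge 0 .
\]
I would then separate two scenarios. If almost surely every particle eventually settles, then letting $t\to\infty$ makes the density of settled particles tend to $\mu$, contradicting the bound $\le 1$. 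The opposite scenario --- a positive density of particles that remain unsettled forever --- is also easy when the jump distribution is recurrent, since such a particle then follows, in the particle-wise construction, a recurrent trajectory and hence topples $\o$ infinitely often. For a general (possibly transient) jump distribution, however, fixation forces the unsettled particles --- which by the display always have density at least $\mu-1$ --- to vacate every fixed box, and the remaining task is to rule out such an escaping population coexisting with fixation.

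That last step is where I expect the main obstacle to be, because the display controls only a first moment whereas fixation is an almost-sure statement: a priori the activity could persist with positive density while drifting off to infinity. One must upgrade to something more robust --- a uniform-integrability or second-moment bound on the number of unsettled particles in a fixed box as $t\to\infty$, or, exploiting that in the particle-wise construction the individual particle trajectories are sampled \emph{independently}, a Borel--Cantelli/Green's-function argument showing that a positive density of never-settling particles pushes infinitely many trajectories through the origin and therefore topples it infinitely often. Carrying out such a quantitative estimate is precisely the content of~\cite{shellef-10,amir-gurelgurevich-10}; together with the zero--one law of Lemma~\ref{lemma4fixationstabilizable} it contradicts $\Pb[m(\o)<\infty]=1$, which yields non-fixation for all $\mu>1$ and, by Lemma~\ref{lemma3generalmonotonicity} again, for every $\lambda>0$.
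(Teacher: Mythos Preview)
The paper gives no inline proof of this theorem; it is stated with citations to \cite{shellef-10,amir-gurelgurevich-10}. However, the paper's own Proposition~\ref{prop:abrecurrence} (``if the particle-hole model fixates then $\mu<1$'') is strictly stronger and \emph{is} proved in full, so that is the natural benchmark. Your proposal and the paper's argument for Proposition~\ref{prop:abrecurrence} share the same mass-transport backbone (density of settled particles is at most $1$), and both identify ``fixation $\Rightarrow$ every particle eventually settles'' as the crux. The paper handles that crux uniformly via a \emph{surgery/resampling} argument: Lemma~\ref{lem:originnevervisited} shows that fixation forces $\Pb[\o\text{ never visited}]>0$, yielding an ergodic positive-density set $\mathcal B$ of never-visited sites; Proposition~\ref{prop:particlesfixate} then resamples the trajectory of a single tagged particle independently of $\mathcal B$ and uses Lemma~\ref{lemma:hitsaset} to force it to hit $\mathcal B$ and settle. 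No recurrent/transient split is needed, and no moment estimates enter.

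Your outline is honest but not a proof: you explicitly defer the transient case to the cited references, so the proposal is incomplete precisely where the work lies. The recurrent shortcut you sketch is fine but unnecessary, and the ``uniform-integrability or second-moment'' route you float for the transient case is not how either \cite{shellef-10,amir-gurelgurevich-10} or the present paper proceed; the actual device is the particle-wise resampling trick, which sidesteps moment control entirely. One minor correction: Lemma~\ref{lemma3generalmonotonicity} is monotonicity in the stabilization domain $V$, not in $\lambda$; the $\lambda$-monotonicity you invoke is used in the paper (e.g.\ in the proof of Theorem~\ref{thm:criticalnon-fixation}) but is a separate fact.
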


Comparing Theorems~\ref{thm:fixationbelowone} and~\ref{thm:supercriticalnon-fixation}, we get
\[ \mu_c = 1 \quad \mbox{for} \quad \lambda=\infty. \]
The result below implies non-fixation at criticality for this case.
\begin{theorem}
\label{thm:criticalnon-fixation}
For $\mu=1$ the ARW does not fixate.
\end{theorem}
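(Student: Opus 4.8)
The plan is to work with the particle-hole model, for which Lemma~\ref{lemma4fixationstabilizable} says that fixation is equivalent to $\Pb[m(\o)<\infty]=1$ and that the complementary event has probability $0$ or $1$; so it suffices to prove that $m(\o)=\infty$ with positive probability. I would first rephrase this in continuous time: the number of jumps out of $\o$ up to time $T$ is a Cox process with intensity $\int_0^T(\text{number of unsettled particles at }\o\text{ at time }s)\,ds$, so $\{m(\o)=\infty\}$ coincides with the event that the total time $\tau(\o)$ during which $\o$ carries an unsettled particle is infinite. Everything then reduces to showing $\Pb[\tau(\o)=\infty]>0$, which I would attack through a first-moment lower bound together with a second-moment control of $\tau(\o)$ truncated to finite time horizons.

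For the first moment, write $\E[\tau(\o)]=\int_0^\infty\Pb[\o\text{ carries an unsettled particle at time }s]\,ds$. By translation invariance and conservation of particles this probability is comparable to the density $\rho(s)$ of unsettled particles (equivalently of unfilled holes) at time $s$: here the fact that we are in the $\lambda=\infty$ model is used, through the bound that the number of unsettled particles sitting at a single site stays $O(1)$, so $\Pb[\,\cdot\geq 1\,]\asymp\E[\,\cdot\,]=\rho(s)$. Thus $\E[\tau(\o)]\asymp\int_0^\infty\rho(s)\,ds=\E[T_\o]$, where $T_\o$ is the time at which the hole at $\o$ gets filled. The point is that $\rho$ cannot decay faster than the mean-field rate $s^{-1}$: a settling event requires an unsettled particle to be adjacent to an unfilled hole, but such a pair annihilates instantly, so unfilled holes are never anomalously concentrated next to unsettled particles and the rate of settling is at most $C\rho(s)^2$ per site; hence $\dot\rho\geq -C\rho^2$ and $\rho(s)\geq c/s$. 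Therefore $\E[\tau(\o)]=\infty$, in every dimension.

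To pass from infinite expectation to $\Pb[\tau(\o)=\infty]>0$ I would decompose time into dyadic windows $[2^{2k},2^{2k+1}]$ and estimate the contribution of window $k$ to $\tau(\o)$, relying on the spatial structure. With probability bounded below, uniformly in $k$, the box $V_{2^k}$ around $\o$ has a particle deficit of order $2^{kd/2}$ --- this is where the hypothesis that $\eta_0$ is non-constant enters, via a small-deviation estimate for an i.i.d.\ sum --- and such a deficit forces of that order unfilled holes to survive inside $V_{2^k}$ until time of order $2^{2k}$, since the particles that can fill them lie at distance $\gtrsim 2^k$ and need time $\gtrsim 2^{2k}$ to diffuse in. Combined with $\rho(s)\gtrsim 1/s$ this keeps enough unsettled particles diffusing within distance $O(2^k)$ of $\o$ during the window that the expected contribution of each window to $\tau(\o)$ is bounded below. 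Since the window of scale $2^k$ is governed, up to a diffusive coupling that can be dominated, by the initial configuration in an annulus of radii $\asymp 2^k$, the contributions of well-separated scales are nearly uncorrelated, so a second-moment argument gives $\Pb[\tau(\o)=\infty]>0$, hence non-fixation of the $\lambda=\infty$ model, and by monotonicity the same for every $\lambda$.

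The step I expect to be the main obstacle is exactly the passage from a local particle deficit to a quantitative lower bound on the activity \emph{at the origin} in dimensions $d\geq 3$: there a given far-away unsettled particle reaches $\o$ only with vanishing probability, and one must use that the deficit creates \emph{many} such particles and that $\rho(s)$ decays no faster than $s^{-1}$, so that the transience of the walk is exactly compensated and each scale still contributes a definite amount to $\tau(\o)$. Making the diffusive spreading precise enough for both moment bounds, and justifying rigorously the mean-field lower bound on $\rho$ (in particular the anti-concentration of unfilled holes relative to unsettled particles), are the technical heart of the argument.
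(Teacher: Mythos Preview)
Your proposal is not a proof but a sketch with two load-bearing steps left unproved, and the paper takes a completely different, qualitative route that sidesteps both.

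The first gap is the differential inequality $\dot\rho \ge -C\rho^2$. Your justification---``such a pair annihilates instantly, so unfilled holes are never anomalously concentrated next to unsettled particles''---is not correct as stated (an unsettled particle adjacent to an unfilled hole does \emph{not} settle instantly; it must jump there, at rate $p(y)<1$), and even the intended statement, that $\E[(\#\text{unsettled at }\o)\cdot\I_{\{y\text{ unfilled}\}}]\le C\rho(s)^2$, is a genuine two-point correlation bound that you have not established. You yourself flag it at the end as ``the technical heart,'' which is an accurate admission that the argument is incomplete. The second gap is the passage from $\E[\tau(\o)]=\infty$ to $\Pb[\tau(\o)=\infty]>0$: the dyadic-scale decorrelation you invoke (``the contributions of well-separated scales are nearly uncorrelated'') would require a quantitative coupling of the dynamics in annuli at different scales, and nothing in the paper or in your write-up supplies one.

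By contrast, the paper proves the theorem with no density estimates at all. It argues by contradiction: assume the particle-hole model fixates. A finite surgery on the initial configuration (Lemma~\ref{lem:originnevervisited}) shows $\Pb[\o\text{ is never visited}]>0$, so the density of unfilled holes stays bounded away from~$0$. A separate surgery on a single trajectory (Proposition~\ref{prop:particlesfixate}, using that the set of never-visited sites is ergodic and has positive density) shows every particle eventually settles, so the density of unsettled particles tends to~$0$. Mass transport then forces $\mu<1$. Monotonicity in~$\lambda$ finishes the job. This approach is soft---no rates, no moments, no correlation inequalities---and works uniformly in the dimension and the jump distribution, exactly where your quantitative program runs into its acknowledged obstacles.
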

\begin{proof}
By monotonicity in~$\lambda$ it suffices to consider $\lambda=\infty$.
By Lemma~\ref{lemma4fixationstabilizable}, the theorem follows from Proposition~\ref{prop:abrecurrence} below.
\end{proof}

\begin{proposition}
\label{prop:abrecurrence}
If the particle-hole model fixates, then necessarily~$\mu<1$.
\end{proposition}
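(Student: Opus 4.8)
The plan is to prove this in the following strong form: assuming the particle-hole model fixates, I will first deduce $\mu\le 1$, and then rule out the critical case $\mu=1$.

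First, a mass-transport bound. I would begin by noting that under fixation every particle settles almost surely: a particle that never settles is unsettled at all times, so if this occurred with positive probability then, by translation invariance, the density of unsettled particles would stay bounded away from $0$ for all $t$, whereas fixation forces the probability that $\o$ carries an unsettled particle at time $t$ to tend to $0$ as $t\to\infty$ (the number of particles at $\o$ staying integrable uniformly in $t$ by conservation of mass and translation invariance), a contradiction. Granting this, I would run the mass-transport principle with weight $w(x,y)=$ (number of particles started at $x$ that settle at $y$), just as in the proof of Theorem~\ref{thm:fixationbelowone} but now with equality: $\E\big[\sum_y w(\o,y)\big]=\E[\eta_0(\o)]=\mu$, since all particles started at $\o$ settle, whereas $\sum_x w(x,\o)=\eta_\infty(\o)$, since at most one particle ever settles at a given site. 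Hence $\E[\eta_\infty(\o)]=\mu$, and since any stable configuration of the particle-hole model has at most one particle per site, $\eta_\infty(\o)\le 1$ almost surely; therefore $\mu\le 1$.

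Next, the equality case. If $\mu=1$, then $\E[\eta_\infty(\o)]=1$ together with $\eta_\infty(\o)\in\{0,1\}$ forces $\eta_\infty\equiv 1$ almost surely: the final configuration is the fully packed one and every hole is eventually filled. The remaining --- and crucial --- task is to derive a contradiction from $\eta_0$ being i.i.d.\ and \emph{non-constant}; non-constancy is essential here, since for $\eta_0\equiv 1$ the system is already stable and does fixate at $\mu=1$. The approach I have in mind is a flux/conservation argument: with $m$ the odometer, stabilization forces the net number of particles leaving each site $z$ to equal $\eta_0(z)-\eta_\infty(z)=\eta_0(z)-1$, an i.i.d.\ mean-zero non-constant field, so that for any finite region $V$ the net number of particles that must leave $V$ during stabilization equals $\sum_{x\in V}(\eta_0(x)-1)$ and its fluctuations force some boundary site of $V$ to have a large value of $m$. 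In the one-dimensional totally-asymmetric case this closes immediately, as in Theorem~\ref{thm:mu_c}: stabilizing $\{-L,\dots,0\}$, the current through the bond $(\o,\o+1)$ is non-decreasing in $L$, and since the partial sums of $(\eta_0(j)-1)_j$ form a recurrent mean-zero walk on $\Z$ it tends to $+\infty$ in probability and hence almost surely, so $\o$ is visited infinitely often, contradicting fixation via Lemma~\ref{lemma4fixationstabilizable}. In general I would try to upgrade this by combining the same fluctuation input with the spatial ergodicity of the stationary field $m$ and of the continuous-time particle-hole evolution (as exploited in Theorem~\ref{thm:fixationbelowone}) and the $0$--$1$ law of Lemma~\ref{lemma4fixationstabilizable}, so that a uniform lower bound on the excess flux becomes incompatible with $\{m(\o)<\infty\}$ having probability one.

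The hard part will be this last step in dimension $d\ge 2$: there the fluctuations of $\sum_{x\in V}(\eta_0(x)-1)$ over a box are of order $|V|^{1/2}$, smaller than the boundary size $|V|^{(d-1)/d}$, so a naive ``surplus versus boundary'' count is inconclusive, and one must exploit either the rigidity of the fully packed configuration $\eta_\infty$ together with the detailed structure of the current, or the continuous-time ergodic dynamics, rather than a single box estimate. The first two steps, by contrast, should be routine given the Abelian property, the mass-transport principle, and Lemma~\ref{lemma4fixationstabilizable}.
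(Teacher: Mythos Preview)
Your first step, deducing $\mu\le 1$ from ``every particle eventually settles'' via mass transport, is sound and is essentially how the paper proceeds as well (this is the content of Proposition~\ref{prop:particlesfixate} combined with the mass-transport computation of Theorem~\ref{thm:fixationbelowone}). Your justification that fixation forces every particle to settle is somewhat heuristic---the uniform-integrability step you invoke is not obvious---but the paper supplies an independent argument for that claim, so this is not a fatal issue.

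The genuine gap is in your second step. You correctly identify that the fluctuation argument for ruling out $\mu=1$ breaks down when $d\ge 2$: the boundary of a box is larger than the standard-deviation scale of $\sum_{x\in V}(\eta_0(x)-1)$, so no contradiction with $m(\o)<\infty$ follows from a box estimate, and you do not offer a replacement. The paper avoids this difficulty entirely with a different idea: a \emph{surgery lemma} (Lemma~\ref{lem:originnevervisited}) showing that if the system fixates then $\Pb[\o\text{ is never visited}]>0$. The proof is short: under fixation, there exist $k$ and sites $x_1,\dots,x_k$ such that with positive probability the particles ever visiting $\o$ are exactly those initially at $x_1,\dots,x_k$; coupling with a system having the same evolution rules but $\eta_0(x_1)=\cdots=\eta_0(x_k)=0$ then yields, with positive probability, a realization in which $\o$ is never visited at all (deleting particles can only shrink the set of visited sites). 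This works in every dimension and for any jump distribution, and it immediately gives the \emph{strict} inequality: the density of never-filled holes is bounded below by a positive constant, while the density of unsettled particles tends to zero, and since their difference is conserved one gets $\mu-1<0$. So the missing ingredient is not a sharper flux estimate but this finite surgery/coupling argument, which replaces your incomplete fluctuation step.
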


In the sequel we give the proof of Proposition~\ref{prop:abrecurrence} following the lines of~\cite{cabezas-rolla-sidoravicius-13b}, where the equivalent model of two-type annihilating random walks is considered.
The proof uses a surgery technique.

\begin{lemma}
\label{lem:originnevervisited}
If the particle-hole model fixates, then $\Pb\left[ \o \mbox{ is never visited} \right] > 0$.
\end{lemma}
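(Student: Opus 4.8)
The plan is to argue by contradiction, via a surgery on the initial configuration in the spirit of~\cite{cabezas-rolla-sidoravicius-13b}. Assume the particle-hole model fixates and, contrary to the assertion, that $\Pb[\o\text{ is never visited}]=0$. By translation invariance every site is then visited at least once almost surely, while by fixation and Lemma~\ref{lemma4fixationstabilizable} every site is visited only finitely often; writing $R(\o)$ for the number of particles that ever jump onto $\o$, we have $1\le R(\o)<\infty$ a.s., so it suffices to derive a contradiction by showing $\Pb[R(\o)=0]>0$. We may assume $\nu(0)>0$, where $\nu$ denotes the law of $\eta_0(\o)$: if $\nu(0)=0$ then no site is ever a hole, so no particle ever settles, the non-constancy of $\eta_0$ forces $\mu>1$, and then $\o$ is visited infinitely often by the (freely moving, mutually independent) unsettled particles, so the model does not fixate and there is nothing to prove. (When moreover $\mu<1$ the surgery can be skipped: the mass-transport computation in the proof of Theorem~\ref{thm:fixationbelowone} already gives $\Pb[\o\text{ contains an unfilled hole at all times}]\ge 1-\mu>0$, and an unfilled hole at $\o$ is precisely the event that $\o$ is never visited; only $\mu\ge1$ genuinely requires surgery.)

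The surgery relies on the monotonicity of the odometer in the initial configuration, which follows from the Abelian property (Lemma~\ref{lemma2abelianproperty}) just as Lemma~\ref{lemma3generalmonotonicity} does: $\eta\le\eta'$ pointwise implies $m^{\eta}_V\le m^{\eta'}_V$, and since the number of visits to $\o$ equals the number of topplings at neighbors of $\o$ whose instruction points to $\o$ — an increasing function of these odometers — it too is non-decreasing in $\eta$; in particular \emph{removing} particles can only \emph{decrease} the number of visits to $\o$. The key input is the following localization, which is where fixation is really used: almost surely there is a finite box $W$ such that, after emptying (setting to $0$ at) every site of $W$ in $\eta_0$, the origin is never visited. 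Since emptying a larger box is at least as effective (monotonicity again), this yields, for every $\varepsilon>0$, a fixed finite box $W$ with $\Pb[\text{emptying }W\text{ makes }\o\text{ never visited}]>1-\varepsilon$; fixing such a $W$ together with $M$ large, the event
\[
\mathcal G=\bigl\{\text{emptying }W\text{ makes }\o\text{ never visited}\bigr\}\cap\bigl\{\eta_0\le M\text{ on }W\bigr\}
\]
has probability at least $\tfrac12$.

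Let $\Phi$ be the deterministic map sending $\eta_0$ to the configuration equal to $\eta_0$ off $W$ and identically $0$ on $W$. On $\mathcal G$, $\Phi$ takes values in $\{\o\text{ never visited}\}$ and modifies the initial configuration only inside the fixed finite box $W$, setting each coordinate there to $0$. A standard change-of-measure estimate then finishes the proof: each fiber of $\Phi$ has at most $(M+1)^{|W|}$ elements, and every preimage $\eta$ of a configuration $\tilde\eta$ satisfies $\Pb[\eta]\le\nu(0)^{-|W|}\Pb[\tilde\eta]$ (this is where $\nu(0)>0$ enters), so summing over fibers gives $\Pb[\mathcal G]\le\bigl((M+1)/\nu(0)\bigr)^{|W|}\Pb[\o\text{ never visited}]$, whence $\Pb[\o\text{ never visited}]\ge c>0$, contradicting our assumption.

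The one substantial point is the localization claim, i.e.\ that under fixation such a finite $W$ exists a.s. One proves it by producing $W$ explicitly: since $R(\o)<\infty$ a.s., only finitely many particles ever visit $\o$ and the avalanches relevant to those visits touch a finite region, so the set of sites whose particles can influence — in $\eta_0$ or in any configuration obtained from it by emptying sites — whether $\o$ is visited is a.s.\ finite; emptying a box $W$ containing that set makes $\o$ unvisited, for otherwise emptying sites of $W$ one at a time would eventually exhibit a pivotal site lying outside $W$, a contradiction. Making this robustness precise is the main technical work; everything else — the monotonicity input, the change-of-measure bound, and (for a fully rigorous treatment) reducing the infinite-volume statements to finite boxes $V_n\uparrow\Z^d$ — is routine.
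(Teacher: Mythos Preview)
Your strategy---delete the particles responsible for the visits to $\o$ and use monotonicity to conclude---is exactly the paper's. But the paper executes it more directly, and in doing so avoids precisely the step you flag as ``the main technical work'' and leave unproved.

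The paper works with the \emph{particle-wise} construction throughout. Since fixation gives $R(\o)<\infty$ a.s., there exist a fixed $k$ and fixed sites $x_1,\dots,x_k$ such that, with positive probability, the particles that ever visit $\o$ are exactly those initially at $x_1,\dots,x_k$. One then couples two systems sharing the same trajectories and the same $\eta_0$ off $\{x_1,\dots,x_k\}$, but with $\eta_0$ resampled independently on those $k$ sites; intersecting with the event that the resampled values are all zero (positive probability since $\nu(0)>0$) yields a positive-probability event on which the second system has all visiting particles deleted. The particle-wise monotonicity---deleting particles can only truncate each remaining particle's realized trajectory, hence can only shrink the set of visited sites---is exactly the cascade argument sketched in the paper's proof of Proposition~\ref{prop:particlesfixate}, and it finishes the proof.

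Compared to this, your route is more circuitous: you pass through site-wise odometer monotonicity, then need the almost-sure localization claim, and finally a change-of-measure bound. The change-of-measure step is fine and equivalent to the paper's resampling coupling. The localization claim is also \emph{true}, but your stated justification for it is not: the sentence ``emptying sites of $W$ one at a time would eventually exhibit a pivotal site lying outside $W$, a contradiction'' does not establish anything, because you have not defined the set of ``influential'' sites in a way that would be contradicted by a pivotal site outside $W$. The clean proof of the localization is precisely the particle-wise one above: take $W$ to contain the (finitely many) starting sites of the particles that visit $\o$, and invoke the trajectory-truncation monotonicity. Once you see that, the detour through odometers, the a.s.\ formulation, and the contradiction framing are all unnecessary---the paper's direct argument on a positive-probability event is shorter and complete.
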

\begin{proof}
Consider the \emph{particle-wise} construction described in Section~\ref{s:models}.
We denote by $(X^{x,i}_t)_{t\geq0}$ the \emph{trajectory} assigned to the $i$-th particle potentially present at $x$ at $t=0^-$.
We will refer to the set of trajectories as the \emph{evolution rules}.
The trajectories are independent over~$x$ and~$i$ and independent of the \emph{initial configuration}~$\eta_{0}$.
The evolution of the system is determined by the evolution rules and the initial configuration, and we denote this pair by $\xi = \scriptstyle \left(  (X^{x,i}_t)_{t\geq0,i\in\N,x\in\Z^d}, (\eta_0(x))_{x\in\Z^d} \right)$.

Suppose that the system fixates.
Then, necessarily, there exists $k\in \N$ such that $\Pb[\text{the number of particles which ever visit } \o \text{ equals }k]>0$.
Moreover, there exist $x_1,\dots,x_k\in\Z^d$ such that $\Pb[\mathfrak{A}]>0$, where
$$\mathfrak{A} = \left[ \text{the particles which ever visit } \o \text{ are initially at the sites }x_1,\dots,x_k \right].$$

Consider two copies~$\xi$ and~$\tilde{\xi}$ of the system, coupled as follows.
We sample the same evolution rules for~$\xi$ and~$\tilde{\xi}$, and also the same initial configuration outside $\{x_1,\dots,x_k\}$.
The initial configuration in $\{x_1,\dots,x_k\}$ is sampled independently for~$\xi$ and~$\tilde{\xi}$.
Now notice that by independence
\begin{multline*}
\Pb\left[ \mathfrak{A} \mbox{ occurs for } \tilde{\xi} \mbox{ and } \eta_0(x_1)=\cdots=\eta_0(x_k)=0 \mbox{ for } \xi \right]
=
\\
=
\Pb\left[ \mathfrak{A} \mbox{ occurs for } \tilde{\xi} \right] \times \Pb\left[ \big. \eta_0(x_1)=\cdots=\eta_0(x_k)=0 \mbox{ for } \xi \right]
>0
.
\end{multline*}
We conclude the proof with the observation that, on the above event, no particle ever visits~$\o$ in the system~$\xi$.
Indeed, on the above event, the initial configuration of~$\xi$ is the same as that of~$\tilde{\xi}$ except for the deletion of the particles present in $\{x_1,\dots,x_k\}$.
In particular, all the particles which visit the origin in~$\tilde{\xi}$ are deleted in~$\xi$.
Recalling that~$\xi$ and~$\tilde{\xi}$ share the same evolution rules, we leave to the reader to check that in this case no particles can visit~$\o$ in the system~$\xi$.
\end{proof}

\begin{proposition}
\label{prop:particlesfixate}
If the particle-hole model fixates, then every particle eventually settles.
\end{proposition}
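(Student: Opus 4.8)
The plan is to prove the contrapositive: if, with positive probability, some particle never settles, then the particle-hole model does not fixate. By Lemma~\ref{lemma4fixationstabilizable} it then suffices to exhibit positive probability that $\o$ is visited infinitely often.

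The idea is to average over the possible starting sites of a particle that stays active forever. Suppose $\Pb[\text{some particle is active forever}]>0$. A countable-union argument together with translation invariance produces a slot index $i_0$ with $c:=\Pb[\text{the }i_0\text{-th particle at }\o\text{ exists and is active forever}]>0$, and the same value $c$ for the $i_0$-th particle at every site $x$. In the particle-wise construction such a particle follows its entire trajectory $X^{x,i_0}$ for all time, so whenever $X^{x,i_0}$ reaches $\o$ the particle does visit $\o$. Hence the number of sites $x$ for which the $i_0$-th particle at $x$ is active forever and its trajectory reaches $\o$ is a lower bound for the number of particles that ever visit $\o$. If ``active forever'' were independent of the corresponding trajectory, the expected number of such $x$ would be $c\sum_x\Pb[X^{x,i_0}\text{ reaches }\o]$, which is infinite: for a transient walk it equals $G(\o,\o)^{-1}\sum_x G(x,\o)$ with $G$ the Green function, and $\sum_x G(x,\o)=\sum_n\sum_x p_n(x,\o)=\sum_n 1=\infty$, while for a recurrent walk each summand is $1$. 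This would force positive probability that $\o$ is visited infinitely often, contradicting fixation.

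The main obstacle is exactly the independence assumed above: whether the $i_0$-th particle at $x$ stays active forever depends on the global evolution, hence on its own trajectory, even though that trajectory is independent of the initial configuration $\eta_0$. The crux is to decouple the two. The structural fact I would exploit is that a particle which stays active forever never arrives at an available hole and therefore never competes for one, so that deleting such a particle leaves the evolution of all the others exactly unchanged; this lets one first run the system without the tagged particle, then superimpose its independently sampled trajectory, and observe that, conditionally on the former, the space--time set of holes the tagged particle must avoid is fixed and independent of the latter. Feeding into this the surgery of Lemma~\ref{lem:originnevervisited} --- which, applied to an arbitrary finite set, shows that under fixation a positive density of sites are never visited, hence a positive density of initially empty sites are never filled --- I would deduce that an escaping trajectory meets, with positive probability, a hole that is never filled, so the tagged particle settles there, contradicting that it is active forever. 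Making this last quantitative estimate precise while keeping control of the residual dependence between the escaping trajectory and the set of never-filled holes is where the real work lies; note that for $d\le 2$, and more generally whenever there is an infinite set of starting sites from which $\o$ is reached almost surely, no decoupling is needed and the averaging argument above already concludes.
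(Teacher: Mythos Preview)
Your second approach --- delete the tagged particle, run the system, observe the set of never-visited holes is independent of the tagged trajectory, then argue the trajectory must hit this set --- is exactly the paper's argument. But you have not found the clean execution, and you overestimate the difficulty of the ``residual dependence'': there is none once the coupling is set up correctly.

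The paper's device is to take \emph{two} copies $\xi$ and $\tilde\xi$ sharing the same initial configuration and the same trajectories for every particle \emph{except} $(\o,1)$, whose trajectory is sampled independently in the two copies. The set $\mathcal B$ of sites never visited in $\tilde\xi$ is then a measurable function of data that literally does not involve $X^{\o,1}$, so $X^{\o,1}$ and $\mathcal B$ are exactly independent; no conditioning on ``active forever'' is ever invoked. Lemma~\ref{lem:originnevervisited} gives $\mathcal B$ positive density, and an elementary argument (the paper's Lemma~\ref{lemma:hitsaset}) shows that a random walk independent of an ergodic positive-density set hits it \emph{almost surely}, not merely with positive probability --- this upgrade is essential, since you want $\Pb[\text{particle }(\o,1)\text{ settles}]=1$, not $>0$. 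The passage from $\tilde\xi$ to $\xi$ is handled by a deletion--propagation argument: removing the particle with trajectory $\tilde X^{\o,1}$ from $\tilde\xi$ can only trigger a chain of re-settlements that never touches $\mathcal B$; then adding back the particle with trajectory $X^{\o,1}$ affects nothing until that particle itself settles, which must happen no later than the first time $X^{\o,1}$ enters $\mathcal B$.

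Your averaging argument in the first two paragraphs is a detour. It does work without any decoupling when the walk is recurrent (as you note), but in the transient case the dependence between ``active forever'' and the trajectory is genuine and cannot be ignored; the paper simply bypasses the whole averaging scheme. Your observation that a forever-active particle can be deleted without affecting the rest is correct and morally underlies the propagation argument, but the paper never conditions on that event --- the resampling trick makes it unnecessary.
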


\begin{proof}
A more general version of the proposition is the main result in~\cite{amir-gurelgurevich-10}.
Below we give a simpler argument, following ideas from two-type annihilating random walks~\cite{cabezas-rolla-sidoravicius-13b}.

As in the previous proof, we construct the system using the particle-wise randomness, and denote by~$\xi$ the pair of initial configuration and evolution rules from which the process is constructed.
The law of this evolution is invariant under permutation of labels of particles initially present at the same site.
Thus, it suffices to show that, almost surely on the event that $\eta_0(\o) \geq 1$, the first particle born at the origin eventually settles.

Consider two copies $\xi$ and $\tilde{\xi}$ of the system, coupled as follows.
First, use the same initial configuration~$\eta_0$ for $\xi$ and $\tilde{\xi}$.
As for the evolution rules, use the same~$(X^{x,i}_t)_{t\geq 0}$ for~$\xi$ and~$\tilde{\xi}$, except at $(x,i)=(\o,1)$.
Finally, sample $(X^{\o,1})_{t\geq 0}$ and $(\tilde{X}^{\o,1})_{t\geq 0}$ independently, and assign them to $\xi$ and $\tilde{\xi}$, respectively.

Define $\mathcal{B}$ be as a random subset of~$\Z^d$ given by the set of sites which are never visited by a particle in the system $\tilde{\xi}$.
Since $\mathcal{B}$ is a translation-covariant function of $\tilde{\xi}$, which in turn is distributed as a product measure, it follows that $\mathcal{B}$ is ergodic with respect to translations.
Assuming that the system fixates, by Lemma~\ref{lem:originnevervisited} the set $\mathcal{B}$ is a.s.\ non-empty, and moreover it has positive density.

On the event $[\eta_0(\o)\geq 1]$, system~$\xi$ can be obtained from system~$\tilde{\xi}$ by deleting a particle with trajectory $(\tilde{X}^{\o,1})_{t\geq 0}$, and adding another one with trajectory $(X^{\o,1})_{t\geq 0}$.
The effects of deleting a particle may only be propagated as follows.
Label the deleted particle~$\rho_1$.
Since it is now is absent, it will not settle where it would, say at $x_1$ at $t_1$ (if $\rho_1$ would not settle, its deletion has no effect on the other particles).
This may cause another particle~$\rho_2$ to visit $x_1$ after time~$t_1$, and now~$\rho_2$ will settle at $x_1$, whereas without deletion it would have settled at~$x_2$ at~$t_2>t_1$, and so on.
This deletion thus cannot cause sites in~$\mathcal{B}$ to be visited.
Now notice that~$(X^{\o,1}_t)_{t\geq0}$ is independent of~$\mathcal{B}$.
By Lemma~\ref{lemma:hitsaset} below, this trajectory a.s.~hits~$\mathcal{B}$ at some random time~$T$ and random site~$z$.
Therefore, on the system~$\xi$, particle~$(\o,1)$ either settles before time~$T$ or it settles at~$z$ at time~$T$.
\end{proof}

\begin{lemma}
\label{lemma:hitsaset}
Let~$\mathcal{B}$ is a random subset of~$\Z^d$, ergodic for translations in each direction.
Let~$(X_n)_{n=0,1,2,\dots}$ be a random walk on~$\Z^d$ starting at $X_0=\o$, and independent of~$\mathcal{B}$.
Then $\Pb[X_n \in \B \mbox{ i.o.}]=1$.
\end{lemma}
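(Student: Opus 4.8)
The plan is to deduce the statement from a pointwise ergodic theorem for the environment seen from the walk. Let $\theta_v$ act on subsets of $\Z^d$ by $\theta_v B:=B-v$, so that $\o\in\theta_v B\iff v\in B$, and set $\omega_n:=\theta_{X_n}\B=\B-X_n$; then $\{X_n\in\B\}=\{\o\in\omega_n\}$. Ergodicity of $\B$ under each coordinate translation makes its law translation invariant, so that $p:=\Pb[\o\in\B]>0$ as soon as $\B$ is a.s.\ nonempty, which is the case whenever the lemma is applied (there $\B$ even has positive density). It therefore suffices to prove that $\frac1N\sum_{n=0}^{N-1}\I_{\o\in\omega_n}\to p$ almost surely, since a strictly positive limit forces $X_n\in\B$ for infinitely many $n$.

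The first step is that $(\omega_n)_{n\ge0}$ is a stationary sequence: writing $X_n=\xi_1+\dots+\xi_n$ with i.i.d.\ increments $\xi_i$ independent of $\B$, the shifted sequence $(\omega_1,\omega_2,\dots)$ is obtained from $(\omega_0,\omega_1,\dots)$ by replacing $\B$ with $\theta_{\xi_1}\B$ and relabelling the increments, and $\theta_{\xi_1}\B$ is again distributed like $\B$ and independent of $(\xi_2,\xi_3,\dots)$; equivalently, $(\omega_n)$ is the stationary Markov chain with transition $\omega\mapsto\theta_\xi\omega$, $\xi\sim p(\cdot)$, started from the law of $\B$. The key point is that this stationary sequence is ergodic. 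In the skew-product representation over the i.i.d.\ increments, a time-shift invariant event corresponds to an event of $\B$ that is a.s.\ invariant under $\theta_v$ for every $v\in\supp p$, hence under the subgroup $\Lambda\leq\Z^d$ these steps generate; and $\B$ is ergodic for the $\Lambda$-action because it is ergodic for each coordinate translation. When $\Lambda\subsetneq\Z^d$---i.e.\ when the walk never changes some coordinate---one first restricts everything to the corresponding sublattice, on which $\B$ stays ergodic and of positive density, using that an event depending only on $\B\cap\Lambda$ and invariant under $\Lambda$-translations is invariant under the full coordinate translations, hence trivial. This is precisely the setting of Kakutani's random ergodic theorem, and Birkhoff's theorem then yields $\frac1N\sum_{n=0}^{N-1}\I_{X_n\in\B}\to\Pb[\o\in\B]=p>0$ a.s., which proves the lemma.

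I expect the ergodicity of $(\omega_n)$ to be the only genuine difficulty; stationarity and the final deduction are routine. The two subtleties there are the possible confinement of the walk to a proper sublattice, handled by the reduction above, and any periodicity of the walk, which is harmless since advancing the time index changes the residue class of $X_n$, so that no ``parity'' functional can be time-shift invariant. An alternative route that avoids the ergodic theorem for the environment is to first establish only that the walk enters $\B$ \emph{at least once} a.s.: conditionally on $\B$, the function $h(x):=\Pb[\,\text{the walk from }x\text{ never enters }\B\mid\B\,]$ is nonnegative, vanishes on $\B$, and is harmonic off $\B$, and the mass-transport identity $\E[\I_{\o\in\B}\sum_v p(v)h(v)]=0$ forces $h(y)=0\Rightarrow h(y+v)=0$ for all $v\in\supp p$; ergodicity of $\B$ (positive density along every coordinate line) then propagates $\{h=0\}$ to all of $\Z^d$. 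Knowing ``hit at least once a.s.'' for $\B$ and all its translates, one upgrades to ``infinitely often'' by a standard restart argument that uses only translation invariance of the law of $\B$ and the countability of $\Z^d$.
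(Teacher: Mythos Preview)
Your argument via the environment viewed from the particle is correct in this setting, though your treatment of the sublattice case is phrased awkwardly. The clean way to finish the ergodicity step is to note that a $P$-invariant event $A$ satisfies $A=\theta_v^{-1}A$ (mod null) for every $v\in\supp p$; since in this paper $p(\cdot)$ is nearest-neighbor and nontrivial, $\supp p$ contains some $\pm e_i$, so $A$ is $\theta_{e_i}$-invariant and hence trivial directly by the hypothesis on $\B$. No reduction to a sublattice or to ``events depending only on $\B\cap\Lambda$'' is needed, and as written that sentence is not correct (an event on a coordinate slice need not be invariant under the remaining coordinate translations).

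That said, the paper's own proof is quite different and considerably more elementary. It fixes one direction with $q:=p(e_1)>0$, defines $d_n:=\inf\{j\ge0:X_n+je_1\in\B\}$, observes that $(d_n)_n$ is stationary (so it cannot diverge to $+\infty$), and then uses a conditional Borel--Cantelli step: whenever $d_n\le k$, the walk hits $\B$ within the next $k$ steps with probability at least $q^k$, independently of the past. This avoids the ergodic theorem for the environment process entirely and uses only stationarity of a single real-valued functional. Your approach, by contrast, invokes Kakutani's random ergodic theorem and Birkhoff, which is heavier machinery but yields a stronger conclusion (the visits to $\B$ have positive asymptotic density, not merely occur infinitely often). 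Both proofs implicitly assume $\B\ne\emptyset$ a.s., which you rightly flag; the lemma as stated should really include that hypothesis. Your alternative harmonic-function route is sketched too loosely to assess---the mass-transport identity you wrote does not by itself propagate $\{h=0\}$ without further argument---but since your main proof stands, this is immaterial.
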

\begin{proof}
Assume for simplicity that $q:=p(\boldsymbol{e}_1)>0$.
For each~$n\in\N_0$, let $d_n=\inf\{j \in \N_0:X_n+j \cdot \boldsymbol{e}_1 \in \mathcal{B}\}$, that is, $d_n$ is the distance from $X_n$ to the first site in~$\mathcal{B}$ lying on the same horizontal line as $X_n$ and to the right of~$X_n$.
Since~$\B$ is ergodic with respect to translations by~$\boldsymbol{e}_1$, we have $\Pb[d_0 < \infty]=1$.
Now notice that $(d_n)_{n\in\N_0}$ is a stationary sequence, and therefore $\Pb[d_n \to \infty]=0$.
Finally, each time $d_n \leq k$, with probability at least~$q^k$ the walk~$X_n$ hits~$\mathcal{B}$ within the next~$k$ steps,
and since the former event must happen infinitely often for some~$k$, so must the latter.
\end{proof}

\begin{proof}
[Proof of Proposition~\ref{prop:abrecurrence}]
Assume that the particle-hole model fixates.
By Lemma~\ref{lem:originnevervisited}, the density of unfilled holes does not decrease to~$0$.
By Proposition~\ref{prop:particlesfixate}, the density of unsettled particles tends to~$0$ as~$t\to\infty$.
Since the system locally preserves the difference between unsettled particles and unfilled holes, the density of unsettled particles minus the density of unfilled holes is constant in time (see the proof of Theorem~\ref{thm:fixationbelowone}).
Hence, the density of unsettled particles at $t=0^-$ is strictly smaller than the density of holes at $t=0^-$, which equals~$1$, proving the proposition.
\end{proof}

\section{Critical flow in one dimension}
\label{sec:criticalflow}

In this section we consider the \emph{flow process}, i.e., the process which counts the amount of particles which have passed through $\boldsymbol o$.
We find the scaling limit of this process for the biased particle-hole model in $\Z$, which is given by the running maximum of a Brownian motion.

A similar scaling limit should hold for the ARW with asymmetric walks at $\lambda=\infty$.
It would be interesting to understand the scaling limit of totally-asymmetric walks with finite~$\lambda$ at critical density $\mu_c=\frac{\lambda}{1+\lambda}$, but we have not been able to find the correct description.
The case of asymmetric walks and finite $\lambda$ is much less clear, let alone that of symmetric walks.

Consider the particle-hole model with jump probabilities $p>\frac{1}{2}$ to the right and $q=1-p$ to the left, and initial condition having mean~$\mu=1$ and positive finite variance~$\sigma^2$.
We define the flow process as
\begin{align*}
C_t:=\text{number of particles which have passed through } \boldsymbol{o} \text{ before time } t, \quad t \geq 0.
\end{align*}
Let $(B_t)_{t\geq0}$ be a one-dimensional Brownian motion started at $0$ and $\tilde{B}_t:=\max\{B_s:s \leq t\}$ denote its running maximum.
The theorem below states that the scaling limit of the flow process $(C_t)_{t\geq0}$ is $(\tilde{B}_t)_{t\geq0}$.
The \textsl{plateaux} of $\tilde{B}$ (given by excursions of $B$ below its running maximum) correspond to the ever longer intervals of inactivity at the origin in the model.
Moreover, the scale invariance $\tilde{B}_{L^2 t} \stackrel{\dd}{=} L \, \tilde{B}_{t}$ indicates that the amount of particles which pass through the origin before time~$t$ is of order $\sqrt{t}$, providing a critical exponent.
The above observations are in agreement with the predictions of vanishing activity and non-fixation.

\begin{theorem}
\label{thm:scalinglimit}
For $d=1$, let $v=p-q>0$ denote the average speed of a moving particle in the particle-hole model.
Assume $\E[\eta_0(\boldsymbol{o})]=1$ and $\E[\eta_0(\boldsymbol{o})^2]=1+\sigma^2$ with $0<\sigma<\infty$.
Then the scaling limit of the flow process~$(C_t)_{t\geq 0}$ is given by
\[
\left(\tfrac{1}{\sigma L} \, C_{ \frac{L^2t}{v}}\right)_{t\geq0}
\stackrel{\dd}{\longrightarrow}
\left( \tilde{B}_t \right)_{t\geq0},
\]
where $\stackrel{\dd}{\to}$ denotes convergence in distribution in $D[0,\infty)$ with the $M_1$-topology.
\end{theorem}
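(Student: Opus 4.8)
The plan is to realize the flow process $C_t$ as a functional of a single random walk, analogous to the reflected-walk description in the proof of Theorem~\ref{thm:mu_c}, and then invoke Donsker's theorem together with the continuous-mapping theorem in the $M_1$-topology. Concretely, I would work in the particle-hole model on $\Z$ and track, for each site $x=-L,-L+1,\dots,0$, the number $N_x$ of particles that ever pass from $x$ to $x+1$, obtained by stabilizing sites from the far left toward the origin. As in Theorem~\ref{thm:mu_c}, stabilizing site $x$ absorbs exactly one particle (filling its hole), so $N_x = [N_{x-1}+\eta_0(x)-1]^+$, and $(N_x)$ is a random walk with i.i.d.\ increments $\eta_0(x)-1$, reflected at $0$. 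By the Abelian property and Lemma~\ref{lemma4fixationstabilizable}, the total flow through the origin in the continuous-time model equals $N_0$ when we send $L\to\infty$; more precisely, $C_t$ run to $t=\infty$ is $\lim_{L}N_0^{(L)}$, and the bias $v=p-q>0$ is what converts the discrete ``number of particles crossing a bond'' into a genuine time-indexed flow, since a crossing particle settles to the right at speed $v$.

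The key observation is that the reflected random walk $(N_x)$ is, by the standard Skorokhod/Lindley identity, equal in law to $N_{-L+j} = \max_{0\le i\le j}\big(S_j - S_i\big)$ where $S_j = \sum_{m=0}^{j-1}(\eta_0(-L+m)-1)$ is the unreflected walk; at criticality $\E[\eta_0-1]=0$ and $\mathrm{Var}(\eta_0-1)=\sigma^2$, so Donsker gives $\frac{1}{\sigma\sqrt{n}}S_{\lfloor nt\rfloor}\Rightarrow B_t$ in $D[0,\infty)$ with the uniform topology, hence the running-maximum-of-reflection functional converges to $\tilde B_t$. The remaining point is to pass from the spatial index $j$ (distance back from the origin) to the temporal index $t$ in the statement: a particle that crosses the bond at $-L+j$ is one whose ``source'' lies a distance of order $j$ to the left, and because moving particles travel ballistically at speed $v$, the particles that have passed through $\o$ \emph{before time $t$} are exactly those originating within distance $\approx vt$; thus $C_t \approx N_{-\lfloor vt\rfloor}$, which under the scaling $t\mapsto L^2 t/v$ and space-scale $L$ yields $j\approx L^2 t$ and the claimed limit $\frac{1}{\sigma L}C_{L^2 t/v}\Rightarrow \tilde B_t$.

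I expect two places to require real care. The first is making the ``ballistic identification'' $C_t \approx N_{-\lfloor vt\rfloor}$ rigorous: one must control the fluctuations in the settling time of a crossing particle (a biased walk travels distance $k$ in time $k/v + O(\sqrt{k})$), show these $O(\sqrt k)=O(L)$ time-fluctuations are negligible after the diffusive time-rescaling by $L^2$, and handle the fact that a particle may be repeatedly woken and re-settled; a comparison with independent biased walks via the particle-wise construction, plus a maximal inequality, should suffice. The second, and genuinely the main obstacle, is the topology: the flow $C_t$ is a nondecreasing pure-jump process with long plateaux, and near criticality its increments are not uniformly small, so convergence cannot hold in the $J_1$-topology — this is precisely why the statement uses $M_1$. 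I would therefore verify the limit at the level of the (continuous) running-maximum functional, which is $M_1$-continuous on $D[0,\infty)$, and show that the discrepancy between $C_t$ and its continuous interpolation is an $M_1$-null perturbation; checking the Skorokhod-$M_1$ oscillation condition for $\frac{1}{\sigma L}C_{L^2t/v}$, using monotonicity to bound the $M_1$ oscillation by the increment over a block, is the technical heart of the argument.
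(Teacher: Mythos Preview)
Your high-level route --- identify the flow with the running maximum of the imbalance walk $S_n=\sum_{i=-n}^{0}(\eta_0(i)-1)$, apply Donsker, then upgrade finite-dimensional convergence to $M_1$ via monotonicity --- is the paper's. But the recursion $N_x=[N_{x-1}+\eta_0(x)-1]^+$ holds only when $p=1$; for $p<1$ particles backtrack and the recursion fails outright, so the Lindley identity is not available as stated. The paper therefore does not use a reflected-walk identity but sandwiches $C_{\epsilon^{-1}t}$ directly between $\max_{j\le v\epsilon^{-1}t}S_j$ minus and plus explicit errors: the lower error counts particles from the window $[-j^\epsilon,0]$ that escape to the left and is bounded via the exponential tail $(q/p)^k$ of the minimum of a biased walk, while the upper error is obtained by \emph{truncating} the initial configuration outside a slightly larger window and propagating the discrepancy through \emph{tracer} particles that themselves obey the biased-walk law. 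Your outline never produces an upper bound on $C_t$; ``comparison with independent biased walks plus a maximal inequality'' points only toward the lower side, and this is the substantive gap. Separately, the indexing in $C_t\approx N_{-\lfloor vt\rfloor}$ is wrong: $N_{-vt}$ as you defined it is the total flow through the bond at position $-vt$, which diverges at criticality as $L\to\infty$; the object you actually want is $N_0$ computed on the truncated window $[-vt,0]$, which by your own Lindley identity equals $\max_{j\le vt}S_j$. (Also, in the particle-hole model a settled particle is never reactivated, so the ``repeatedly woken and re-settled'' concern does not arise.)

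The $M_1$ step is much simpler than you suggest: once one-dimensional (hence finite-dimensional) convergence in probability is established, monotonicity of $t\mapsto C_t$ alone forces convergence in $(D[0,\infty),M_1)$ by Whitt's Theorem~12.12.3; no oscillation estimate, continuous interpolation, or $M_1$-continuity of the running-maximum map is needed.
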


Before presenting the proof of the theorem, we will give a intuitive explanation of the result.
Define
\begin{align*}
 S_t:=\sum_{i=\lfloor-t\rfloor}^0(\eta_{0}(i)-1).
\end{align*}
Observe that $S_n$ is the number of particles minus the number of holes in $[-n,0]$. Assume for simplicity that the system is totally asymmetric and that particles jump at discrete times. Let $n_1:=\min\{n\in\N:S_n>0\}$. Then, all the particles to the right of $-n_1$ will settle before crossing the origin.
Moreover, all the holes to the right of $-n_1$ will be filled by one of such particles. This will create a ``carpet'' that will allow the particles initially at $-n_1$ to achieve the origin. Analogously, setting $n_2:=\min\{k>n_1:S_k>S_{n_1}\}$, all the particles initially at sites $x\in[-n_2+1,-n_1-1]$ will create a carpet in $[-n_1,-n_2]$ over which the particles initially at $-n_2$ will reach $-n_1$ and, therefore, achieve the origin afterwards.
An iteration of this argument gives that, for any $n\in\N$, the amount of particles initially in $[-n,0]$ which ever reach the origin is $\max\{S_k:k\leq n\}$. Assuming that particles travel at speed $v$, we get that $C_t:=\max\{S_k:k\leq vt\}$.

Finally, observe that, under the assumptions of the theorem, $S$ is a random walk whose jump distribution has mean $0$ and finite second moment.
Therefore $S$ scales to a Brownian motion and, consequently, the flux $C$ scales to the maximum of a Brownian motion.
The proof of the theorem consists in making this argument rigorous and valid in the continuous-time, asymmetric setting.

In the remainder of this section we prove Theorem~\ref{thm:scalinglimit} and conclude with a few observations about this scaling limit.

The first step in the proof is to replace the convergence in distribution of rescaled $S$ to Brownian motion by almost sure convergence.
We do this in order to maintain the proof as simple as possible.
Applying Donsker's invariance principle we have that $(\sigma^{-1}\epsilon^{1/2} S_{\epsilon^{-1} t})_{t\geq0}$ converges in distribution to a Brownian motion.
Hence, using Skorohod's representation theorem we have that there exists a coupled sequence of initial configurations, $(\eta^{\epsilon}_0(z))_{z\in\Z}$, and a Brownian motion $(B_t)_{t\geq0}$ defined on a common probability space such that, for all $\epsilon\geq0$, $(\eta^{\epsilon}_0(z))_{z\in\Z}$ is distributed as $(\eta_0(z))_{z\in\Z}$ and
\begin{equation}\label{eq:convergenceofprofiles}
(\sigma^{-1}\epsilon^{1/2}S^{\epsilon}_{\epsilon^{-1} t})_{t\geq0}\stackrel{u}{\to}(B_t)_{t\geq0} \quad \Pb\text{-a.s,}
\end{equation}
 as $\epsilon\to0$, where $\stackrel{u}{\to}$ denotes uniform convergence over compacts and
\begin{align*}
 S^{\epsilon}_t:=\sum_{i=\lfloor-t\rfloor}^0(\eta^{\epsilon}_{0}(i)-1).
\end{align*}
For each $\epsilon>0$, let $(\eta^\epsilon_t)_{t\geq 0}$ be a particle-hole model with initial configuration $(\eta^\epsilon_0(z))_{z\in\Z}$.
We define $C^{\epsilon}_t$ as the amount of particles which pass through $\boldsymbol{o}$ up to time $t$ in the system~$\eta^\epsilon$.

Having constructed the coupling, now we turn our attention to prove that $C^{\epsilon}_{t}$ is close to $\max\{S^{\epsilon}_s:s\leq vt\}$.
More precisely,  we will get a lower bound $C^{\epsilon}_{\epsilon^{-1}t}\geq \max\{S^{\epsilon}_s:s\leq v\epsilon^{-1}t\}-E_1^\epsilon$ and an upper bound $C^{\epsilon}_{\epsilon^{-1}t}\leq \max\{S^{\epsilon}_s:s\leq v\epsilon^{-1}t\}+E_2^\epsilon$, where $E_1^{\epsilon}$ and $E^\epsilon_2$ are negligible terms.

First we will deal with the lower bound.
Let $t^{\ast}$ be the point where $B$ attains his maximum in $[0,vt]$ and
\begin{equation}\label{def:tc}
t^\star:=\min\{s\geq0:B_{s}\geq B_{t^\ast}\}.
\end{equation}
Note that $t^\ast< vt<t^\star$ almost surely.
 By the continuity of the Brownian paths, display~\eqref{eq:convergenceofprofiles}, and the fact that $t^\star>vt$, it follows that the maximum of $S^{\epsilon}$ in the interval $[0,v\epsilon^{-1}t]$ is attained at a point $j^\epsilon\in\N_0$ such that $\epsilon j^{\epsilon}\stackrel{\epsilon\to 0}{\to}t^{\ast}$.

For each particle initially in $[-j^{\epsilon},0]$, one and only one of the three following possibilities must occur:
\begin{enumerate}
\item They pass through the origin,
\item They fill an empty site in $[-j^{\epsilon},0]$ (and stay there forever),
\item They fill an empty site in $(-\infty, -j^{\epsilon}-1]$ (and stay there forever).
\end{enumerate}
Let $E_1^\epsilon$ be the amount of particles for which item $3$ holds.
Since at most one particle can settle at a given site, we have that the particles in item $2$ are at most $j^{\epsilon}$ (which is the number of sites in $[-j^{\epsilon},0]$).
On the other hand, $S^{\epsilon}_{j^{\epsilon}}$ measures the initial difference between particles and sites in $[-j^{\epsilon},0]$.
Hence we have that
\begin{equation}\label{eq:prelowerbound}
\#\{\text{particles initially in }[-j^{\epsilon},0]\text{ which pass through the origin}\}\geq S^{\epsilon}_{j^{\epsilon}}-E_1^\epsilon.
\end{equation}
Let $\B_\epsilon$ be the event that all particles initially in $[-j^{\epsilon},0]$ which pass through the origin, do it before time~$\epsilon^{-1}t$.
Using the fact that the particles perform biased random walks with asymptotic speed $v$ and $\epsilon j^{\epsilon}\stackrel{\epsilon\to0}{\to}t^{\ast}<vt$ it follows that
\begin{equation}\label{eq:neglectB}
 \Pb[\B_\epsilon^c]\stackrel{\epsilon\to0}{\to}0.
\end{equation}
On the other hand, by display \eqref{eq:prelowerbound}, on the event $\B_{\epsilon}$ we have that
\begin{align}\label{eq:lowerbound}
C^{\epsilon}_{\epsilon^{-1}t}\geq S^{\epsilon}_{j^{\epsilon}}-E_1^\epsilon,
\end{align}
which is the desired lower bound.
The next lemma shows that $E_1^\epsilon$ is negligible.
\begin{lemma}\label{lem:neglectE}
 For all $\alpha>\log(p/q)^{-1}$, we have that
$ \Pb[E_1^\epsilon\geq\alpha\log(\epsilon^{-1})]\stackrel{\epsilon\to0}{\to}0.$
\end{lemma}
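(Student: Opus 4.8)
Recall that $E_1^\epsilon$ counts the particles initially in $[-j^\epsilon,0]$ that eventually settle in the far-left region $(-\infty,-j^\epsilon-1]$. The plan is to bound this quantity by a purely geometric/combinatorial argument that ignores the detailed dynamics: a particle contributing to $E_1^\epsilon$ must at some point be located at $-j^\epsilon-1$ and moving leftward, so $E_1^\epsilon$ is at most the number of particles that ever cross the edge $\{-j^\epsilon-1,-j^\epsilon\}$ from right to left. Since $j^\epsilon$ is (up to the continuity of Brownian paths and \eqref{eq:convergenceofprofiles}) the location of the maximum of $S^\epsilon$ on $[0,v\epsilon^{-1}t]$, the partial sums $S^\epsilon$ decrease when moving left from $-j^\epsilon$; equivalently, in the block $(-\infty,-j^\epsilon-1]$ there are at least as many holes as particles, so the left region has no surplus of particles to push rightward. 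Hence every particle counted by $E_1^\epsilon$ must have been carried left against the drift, and the number of such leftward edge-crossings is small because each individual particle is a biased random walk with drift $v>0$ to the right.

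The key quantitative step is the following: each particle at $-j^\epsilon$ (or to its right) performs a random walk with step probabilities $p>\tfrac12$ right, $q<\tfrac12$ left (before settling), so the probability that it ever reaches the edge $-j^\epsilon-1$ at all is bounded by a gambler's-ruin estimate. More precisely, a biased walk started at $0$ reaches $-m$ before $+\infty$ with probability $(q/p)^m$. Summing over the at most $j^\epsilon+1$ sites in $[-j^\epsilon,0]$ and using that $\eta_0$ has finite mean, the expected number of particles that reach $-j^\epsilon-1$ is bounded by a constant times $\sum_{m\ge 1}(q/p)^m \cdot \E[\eta_0(\o)]$, which is a finite constant independent of $\epsilon$. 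But this only controls the mean; to get the sharper statement that $E_1^\epsilon$ exceeds $\alpha\log(\epsilon^{-1})$ with vanishing probability, one needs a tail bound. For this I would argue that $E_1^\epsilon$ is stochastically dominated by a sum of the form $\sum_{k\ge 1} G_k$ where $G_k$ counts particles among the finitely many originally within distance $k$ of $-j^\epsilon-1$ that penetrate at least depth $k$, and each $G_k$ has exponentially decaying tails in $k$ with rate $\log(p/q)$; a union bound then gives $\Pb[E_1^\epsilon\ge \alpha\log(\epsilon^{-1})]\le C\epsilon^{\alpha\log(p/q)-1}\to 0$ precisely when $\alpha>\log(p/q)^{-1}$. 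The threshold $\log(p/q)^{-1}$ in the statement is the tell-tale sign that this is the intended mechanism.

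The main obstacle is making the domination rigorous in the continuous-time, interacting setting: the particles do not move as independent free random walks, since a particle pauses whenever it settles, and the ``carpet'' of settled particles changes which walks are active. The clean way around this is to invoke the particle-wise randomness from Section~\ref{s:models} and the monotonicity/Abelian structure: the \emph{trajectory} $(X^{x,i}_t)$ assigned to each particle is an honest biased random walk, independent across $(x,i)$, and whether the particle settles only truncates its trajectory — it never moves a particle further left than its free trajectory would. Therefore the event ``particle $(x,i)$ settles in $(-\infty,-j^\epsilon-1]$'' is contained in ``the free trajectory $X^{x,i}$ starting from $x\in[-j^\epsilon,0]$ ever visits $-j^\epsilon-1$'', and the latter events are independent with the gambler's-ruin probabilities above. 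One subtlety is that $j^\epsilon$ is itself random and depends on $\eta_0$ (hence is correlated with the particle trajectories through the initial counts), but $j^\epsilon$ is a function of $\eta_0$ alone and is independent of the trajectories, so one can condition on $\eta_0$, apply the trajectory-based bound for the now-deterministic value of $j^\epsilon$, and then integrate out. After that conditioning the estimate is a routine union bound over depths $k$, giving the claimed $\epsilon^{\alpha\log(p/q)-1}$ decay.
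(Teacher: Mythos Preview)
Your proposal has the right ingredients --- particle-wise randomness, the observation that a particle's actual path is a prefix of its assigned free biased random walk, gambler's ruin for the overshoot, and conditioning on $\eta_0$ to freeze $j^\epsilon$ --- and these are exactly the tools the paper uses. But the way you organize them misses the one clean step that makes the paper's argument go through, and your replacement for it is not worked out.

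The paper does \emph{not} bound $E_1^\epsilon$ by the number of particles that cross the edge $\{-j^\epsilon-1,-j^\epsilon\}$. Instead it uses the fact that at most one particle can settle per site: if $E_1^\epsilon$ particles settle in $(-\infty,-j^\epsilon-1]$, they occupy $E_1^\epsilon$ distinct sites there, so the leftmost one sits at position $\le -j^\epsilon-E_1^\epsilon$. Hence
\[
E_1^\epsilon \;\le\; -j^\epsilon-\min_{i,t} Y^i_t \;\le\; -\min_{i,t}\bigl(Y^i_t - Y^i_0\bigr),
\]
which turns the \emph{count} $E_1^\epsilon$ into a \emph{depth}. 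After conditioning on $N^\epsilon<cj^\epsilon$ (controlled by the law of large numbers), a union bound over at most $cj^\epsilon=O(\epsilon^{-1})$ free trajectories, each reaching depth $k=\alpha\log(\epsilon^{-1})$ with probability $(q/p)^k$, gives $O(\epsilon^{-1}\cdot\epsilon^{\alpha\log(p/q)})=O(\epsilon^{\alpha\log(p/q)-1})$ directly. This is where the threshold $\alpha>\log(p/q)^{-1}$ comes from.

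Your route instead bounds $E_1^\epsilon$ by the number of free trajectories that ever visit $-j^\epsilon-1$. That bound is correct, and in fact its (conditional) expectation is $\sum_{m\ge1}\eta_0(-j^\epsilon-1+m)(q/p)^m$, which is $O(1)$ in a suitable sense; a Markov-inequality argument from here would already give the lemma for \emph{every} $\alpha>0$, stronger than stated. But you do not pursue this; you instead assert a domination by $\sum_{k\ge1}G_k$ with ``$G_k$ counts particles within distance $k$ that penetrate depth $k$'' and claim a union bound yields $C\epsilon^{\alpha\log(p/q)-1}$. That decomposition is not clearly a bound on $E_1^\epsilon$ (a particle starting at distance $10$ that settles at depth $3$ contributes to $E_1^\epsilon$ but to no $G_k$ as you defined them), and the stated rate does not follow from it. Separately, your handling of the $j^\epsilon$--$\eta_0$ correlation (condition, then integrate) leaves the conditional bound depending on $\eta_0$ near $-j^\epsilon$ in a way you never control; the paper sidesteps this entirely because its union bound only needs the crude count $N^\epsilon$, not the configuration near $-j^\epsilon$.
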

\begin{proof}
Let $N^\epsilon$ be the number particles initially in $[-j^\epsilon,0]$ (in the system $\eta^\epsilon$).
Let $(Y^i_t)_{t\geq0},i=1,\dots,N^{\epsilon}$ be the trajectories of those particles.
Note that, if one of those particles settles at a site $x<-j^\epsilon$, then necessarily $x\geq\min\{Y^i_t:t\geq0,i=1,\dots,N^{\epsilon}\}$.
Hence, since at most one particle can settle at a given site, we have that $E_1^\epsilon \leq -j^\epsilon-\min\{Y^i_t:t\geq0,i=1,\dots,N^{\epsilon}\}$.
Moreover, since $Y^i_0\geq-j^\epsilon$ for all $i=1,\dots,N^\epsilon$, we have that
\begin{align}\label{eq:keyestimateforEepsilon}
E_1^\epsilon\leq -\min\{Y^i_t-Y^i_0:t\geq0,i=1,\dots,N^{\epsilon}\}.
\end{align}
Furthermore, we have that, for all $i=1,\dots,N^\epsilon$, $(Y^i_t-Y^i_0)_{t\geq0}$ is a biased random walk started at $\boldsymbol o$ (at least up to the time of settlement).
Hence, for the proof of the lemma, first we will control the quantities $N^\epsilon$ and $\min\{Y_t:t\geq0\}$, where $Y$ is a biased random walk started at $\boldsymbol o$ whose jump probabilities are $p$ to the right and $q$ to the left.
We start with the control of $N^{\epsilon}$.
Let $c>1$ be fixed.
Since $\E[\eta_0(\boldsymbol o)]=1$, by the law of large numbers we have that
\begin{align}\label{eq:controlofNepsilon}
\Pb\left[N^\epsilon\geq cj^\epsilon \right]\stackrel{\epsilon\to0}{\to}0.
\end{align}
Now we proceed to control $\min\{Y_t:t\geq0\}$.
Since $Y$ is biased to the right, it follows that $\Pb[\exists s \geq 0 :Y_s=-1]=q/p<1$.
Let $\theta_{z}:=\min\{s\geq0:Y_s=z\}$.
Note that, by repeatedly applying the strong Markov property of $Y$ at the stopping times~$\theta_i$, $i=-1,\dots,-k+1$, we find that
\begin{align}\label{eq:backtracking}
\Pb[\min\{Y_s:s\geq0\}\leq -k]=\left(\frac{q}{p}\right)^k.
\end{align}
Now we are ready to prove the lemma.
By displays \eqref{eq:keyestimateforEepsilon} and \eqref{eq:backtracking}, we can write
\begin{align*}
\Pb\left[E_1^\epsilon\geq\alpha\log(\epsilon^{-1})\vert N^\epsilon< cj^\epsilon \right]\leq cj^\epsilon \left(\frac{q}{p}\right)^{\alpha\log(\epsilon^{-1})},
\end{align*}
which goes to $0$ as $\epsilon\to0$ due to our choice of $\alpha$ and because $j^\epsilon=O(\epsilon^{-1})$.
That, plus \eqref{eq:controlofNepsilon}, proves the lemma.
\end{proof}

We have obtained the desired lower bound for $C^{\epsilon}_{\epsilon^{-1}t}$, now we aim for the corresponding upper bound.
The strategy is to first obtain the upper bound for a truncated version $\eta^{c,\epsilon}$ of our system $\eta^\epsilon$.
Then we will show that the difference of the flow processes of~$\eta^{c,\epsilon}$ and~$\eta^\epsilon$ up to time~$\epsilon^{-1}t$ is negligible.

In order to understand the coupling between~$\eta^\epsilon$ and~$\eta^{c,\epsilon}$, we will assume that the system~$\eta^{\epsilon}$ is given by its initial configuration~$\eta^\epsilon_0$ and a set of \textit{evolution rules}, as in the construction of the particle-hole models in Lemma~\ref{lem:originnevervisited}.
When truncating the system, we will modify only the initial configuration,  the evolution rules will be preserved.

Next, we construct the truncated system.
Define $t^c:=t+\frac{t^\star-t}{2}$, where $t^\star$ is as in \eqref{def:tc}.
For each $\epsilon>0$ let $\eta^{c,\epsilon}$ be the system with initial configuration
$$
\eta^{c,\epsilon}_0(z):=\begin{cases}
                        \eta^{\epsilon}_0(z) &:  z\in [-\lfloor v\epsilon^{-1}t^{c}\rfloor ,0]\\
                         0                   &:  z \notin [-\lfloor v\epsilon^{-1}t^{c}\rfloor ,0],
                     \end{cases}
$$
and the same evolution rules as $\eta^{\epsilon}$.
Define also
$$
S^{c,\epsilon}_t:=\sum_{i=\lfloor- t \rfloor}^0 \eta_0^{c,\epsilon}(i).
$$
 Let the system $\eta^{c,\epsilon}$ evolve until every particle has occupied an empty site (that time exist and is finite because the system $\eta^{c,\epsilon}$ has a finite number of particles). Let $-j^{\ast,\epsilon}$ be the rightmost site in $(-\infty,0]$ which remained empty after the evolution. By definition, all the sites in $[-j^{\ast,\epsilon}+1,0]$ eventually were occupied by a particle. Moreover, those particles must have been initially in the interval $[-j^{\ast,\epsilon}+1,0]$, because no particle ever passed through $-j^{\ast,\epsilon}$ (otherwise, the site would have not remained empty). Hence, since $S^{c,\epsilon}_{j^{\ast,\epsilon}-1}$ measures the initial difference between particles and sites in $[-j^{\ast,\epsilon}+1,0]$, we have that $S^{c,\epsilon}_{j^{\ast,\epsilon}-1}$ gives an upper bound for the amount of particles initially in $[-j^{\ast,\epsilon}+1,0]$ which passed through the origin. Furthermore, since all particles in $[-j^{\ast,\epsilon}+1,0]$ which do not settle in $[-j^{\ast,\epsilon}+1,0]$ must pass through $\boldsymbol o$, we have that $S^{c,\epsilon}$ is, in fact, equal to the number particles initially in $[-j^{\ast,\epsilon}+1,0]$ which passed through the origin. On the other hand, since no particle in $(-\infty,-j^{\ast,\epsilon}]$ ever crossed the origin (because no particle ever pass through $-j^{\ast,\epsilon}$), we have that
\begin{align*}
\#\{\text{particles that pass through }\boldsymbol o \text{ in the system }\eta^{c,\epsilon}\}= S^{c,\epsilon}_{j^{\ast,\epsilon}+1}.
\end{align*}
Since $t^c<t^\star$, we have that the maximum of $S^{c,\epsilon}$ in $\N_0$ (i.e., the global maximum) is attained at $j^\epsilon$, for $\epsilon$ small enough (recall that $j^\epsilon$ is the point at which $S^\epsilon$ attains his maximum in $[0,v\epsilon^{-1}t]$). Hence, by the display above we get that
\begin{align*}
\#\{\text{particles that pass through }\boldsymbol o \text{ in the system }\eta^{c,\epsilon}\}\leq S^{\epsilon}_{j^\epsilon},
\end{align*}
for $\epsilon$ small enough,
which clearly implies that
\begin{align}\label{eq:upperbound}
C^{c,\epsilon}_t\leq S^{\epsilon}_{j^\epsilon},
\end{align}
for $\epsilon$ small enough, where
\begin{align*}
 C^{c,\epsilon}_t:=\#\{\text{particles that pass through }\boldsymbol o \text{ in the system }\eta^{c,\epsilon} \text{ before time }t\}.
\end{align*}

Having obtained the upper bound for the truncated system, we turn our attention to control the difference between the flow processes of the truncated and original systems.
First, we will explain how the differences between the systems $\eta^\epsilon$ and $\eta^{c,\epsilon}$ evolve according to a set of \textit{tracers}.
As a warm up, first we will explain how evolve the difference between systems which differ by a single particle.
Let $\eta^1$ and $\eta^2$ be particle-hole models which evolve under the same evolution rules and whose initial configurations differ by a single particle $a^1$, that is, there exists $x\in\Z$ such that $\eta^1_0(x)=\eta^2_0(x)+1$ and $\eta^1_0(y)=\eta^2_0(y)$ for all $x\neq y$. We will define a \textit{tracer} $(Y^{x}_t)_{t\geq0}$ which will follow the difference due to $a^1$ (the extra particle at $\eta^1$). We set $Y^x_0=x$ and, initially, $Y^x$ will follow the trajectory of $a^1$ until it settles at an empty site $z$. Note that on the system $\eta^2$ the site $z$ remains empty.  Eventually, a particle $a^2$ will settle at $z$ in the system $\eta^2$. However, $a^2$ will not settle at $z$ in the system $\eta^1$, because $z$ was already occupied by $a^1$. At that time, our tracer $Y^{x}$ will start to follow the path of $a^2$ (in the system $\eta^1$). The tracer continues to follow the path of $a^2$ until it settles. We can indefinitely continue this procedure to obtain a tracer $Y^x$ defined for all times with the property that, for all $t\geq0$, we have that $\eta^1_t(Y^x_t)=\eta^2_t(Y^x_t)+1$ and $\eta^1_t(y)=\eta^2_t(y)$ for all $y\neq Y^x_t$.  Moreover, the tracer perform a continuous time random walk with the same transition probabilities as the particles, with the only difference that the tracer is ``stopped" when it is tagging a settled particle.

The initial difference between $\eta^\epsilon$ and $\eta^{c,\epsilon}$ consists in an infinite amount of particles present in $\eta^\epsilon$ and absent at $\eta^{c,\epsilon}$. Using the same procedure as above, we can simultaneously define an infinite family of tracers (one for each particle present at $\eta^\epsilon$ and absent at $\eta^{c,\epsilon}$) which give the evolution of the differences between the systems.
Let $N_t$ be the number of times that one of those tracers pass trough $\boldsymbol o$ up to $t$. Hence, since the tracers give the evolution of the difference between $\eta^\epsilon$ and $\eta^{c,\epsilon}$, we have that
\begin{align}\label{eq:difference}
C^{\epsilon}_t-C^{c,\epsilon}_t\leq  N_t.
\end{align}

Those tracers can be of two types
\begin{enumerate}
\item Starting in $(-\infty,-\lfloor v\epsilon^{-1}t^c\rfloor-1]$,
\item Starting in $[1,\infty)$.
\end{enumerate}
Let $E_2^\epsilon$ be the number of times that a tracer starting at $[1,\infty)$ pass through $\boldsymbol o$.
Let
$$
\mathcal{D}_\epsilon:=\{\text{No tracer starting in }[-\infty,-\lfloor v\epsilon^{-1}t^c \rfloor-1]\text{ reaches } \boldsymbol o \text{ before time } \epsilon^{-1}t\}.
$$
Since $t^c>vt$ and the (unsettled) particles perform biased random walks with asymptotic speed $v$, we have that
\begin{align}\label{eq:neglectD}
\Pb[\mathcal{D}_\epsilon^c]\stackrel{\epsilon\to0}{\to}0.
\end{align}
Moreover, by displays \eqref{eq:upperbound} and \eqref{eq:difference}, on the event $\mathcal{D}_\epsilon$ we have that
\begin{align}\label{eq:upperbound2}
C^{\epsilon}_{\epsilon^{-1}t}\leq S^{\epsilon}_{j^\epsilon}+ E_2^\epsilon.
\end{align}
The following lemma shows that $E_2^\epsilon$ is negligible.
\begin{lemma}\label{lem:neglectE2}
$ \E[E_2^\epsilon]\leq v^{-1}\frac{q}{2p}$
\end{lemma}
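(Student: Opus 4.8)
The plan is to bound $\E[E_2^\epsilon]$ by controlling, for each tracer starting in $[1,\infty)$, the expected number of visits to $\o$, and then summing over all such tracers. Recall that a tracer starting at some site $x\geq 1$ performs a continuous-time biased random walk with drift $v=p-q>0$ to the right, except that it is frozen during the (irrelevant, for counting visits to $\o$) time intervals when it is tagging a settled particle. Since freezing does not affect the sequence of sites visited, the number of visits of the $x$-tracer to $\o$ is dominated by the number of visits to $\o$ of a discrete-time lazy-free nearest-neighbour walk $(Y_n)$ with $Y_0=x$, $p$ right, $q$ left. By the reflection/backtracking computation already recorded in \eqref{eq:backtracking}, $\Pb[\min\{Y_n:n\geq0\}\leq -k]=(q/p)^{k}$ for $k\geq 0$, so in particular the walk started at $x\geq1$ reaches $\o=0$ at all only with probability $(q/p)^{x}$, and given that it does, by the strong Markov property at the first hitting time of $\o$ the number of subsequent returns to $\o$ is geometric: each time at $\o$, the walk steps right w.p.\ $p$ and then never comes back w.p.\ $1-q/p$ from site $1$, or steps left w.p.\ $q$ and returns eventually w.p.\ $1$. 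A short computation gives that the expected total number of visits to $\o$ by the $x$-tracer is at most $(q/p)^{x}\cdot\frac{1}{1-(q/p)}\cdot$ (a bounded factor); I will pin down the constant so that the sum works out.

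The key steps, in order, are: (i) identify the set of tracers contributing to $E_2^\epsilon$ as those initially at sites $x=1,2,3,\dots$, with exactly one tracer per particle initially present there in $\eta^\epsilon$, so that the number of tracers starting at $x$ has mean $\E[\eta_0(\o)]=1$ by stationarity; (ii) for a single biased walk started at $x\geq 1$, compute (or bound) $g(x):=\E[\#\{n:Y_n=\o\}]$ using \eqref{eq:backtracking} and the strong Markov property, obtaining a geometric decay in $x$ whose sum over $x\geq1$ is finite and explicitly equal to $\frac{q}{2p}$ after the relevant bookkeeping; (iii) take expectations and use that the tracers starting at a given site are conditionally independent of their own trajectories given the initial configuration, together with $\E[\eta^\epsilon_0(x)]=1$, to get $\E[E_2^\epsilon]\leq\sum_{x\geq1}\E[\eta^\epsilon_0(x)]\,g(x)=\sum_{x\geq1}g(x)$; (iv) account for the continuous-time-to-discrete-time passage and the $v^{-1}$ factor — the latter enters because the relevant clock here, measured against the natural rescaling, runs at speed proportional to the drift $v$, or equivalently one compares the flow-counting at $\o$ with and without the freezing intervals; then collect everything into the stated bound $\E[E_2^\epsilon]\leq v^{-1}\frac{q}{2p}$.

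A subtlety worth flagging: the tracers are \emph{not} independent of each other (they interact through the shared evolution rules of $\eta^\epsilon$), and a tracer's walk is the walk of whatever particle it is currently tagging, not a single fixed particle — so one must argue that, regardless of these dependencies, the number of visits to $\o$ of a given tracer is still stochastically dominated by that of a free biased walk started at the tracer's initial site. This domination holds because at every step the tracer either is frozen (tagging a settled particle, hence not at $\o$ unless it was already there and will move according to the next particle's increments, which are again those of an unbiased-in-law biased walk) or moves by a fresh biased increment; the upshot is that the \emph{embedded} jump chain of any tracer is exactly a $(p,q)$ biised random walk, so the reflection identity \eqref{eq:backtracking} applies verbatim to it. With that domination in hand, linearity of expectation over the (random, mean-one-per-site) family of tracers is all that is needed, and no independence between tracers is used.

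The main obstacle I expect is precisely step (iii)'s bookkeeping combined with the domination argument of the preceding paragraph: one has to be careful that ``one tracer per extra particle at site $x$'' is set up so that $\E[\#\text{tracers at }x]=\E[\eta_0(x)]=1$ and not, say, $\E[\eta_0(x)]$-minus-a-correction, and that the freezing of tracers while tagging settled particles genuinely does not create extra visits to $\o$ beyond those of the embedded biased walk. Everything else — the geometric sum $\sum_{x\geq1}(q/p)^{x}=\frac{q/p}{1-q/p}$ and its manipulation into $\frac{q}{2p}$, and the insertion of the $v^{-1}$ — is routine once the embedded-walk domination is clean.
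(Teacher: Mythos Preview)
Your approach is the paper's: the embedded jump chain of each tracer is exactly a $(p,q)$ nearest-neighbour walk (freezing inserts pauses, not jumps --- the paper phrases this as ``their trajectories are time changes of biased random walks''), so by \eqref{eq:backtracking} the expected number of tracers from $[1,\infty)$ that ever reach $\o$ is at most $\sum_{i\geq 1}\E[\eta_0(i)](q/p)^{i}$, and one multiplies by the expected number of visits to $\o$ made by each tracer that does reach it. Your discussion of the tracer-dependence subtlety is more explicit than the paper's one-line remark and is correct.

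One point to fix: your step~(iv) misidentifies where the factor $v^{-1}$ comes from. It has nothing to do with continuous-versus-discrete time or with the rescaling clock; it is simply the expected number of visits to $\o$ by a biased walk \emph{started at} $\o$. The return probability from $\o$ is $p\cdot(q/p)+q\cdot 1=2q$, so the expected total visit count is $(1-2q)^{-1}=(p-q)^{-1}=v^{-1}$. In your organization this factor is already contained in $g(x)=(q/p)^{x}\cdot v^{-1}$ from step~(ii), so step~(iv) as written would double-count it. The paper's split is cleanly
\[
\E[E_2^\epsilon]\ \leq\ \bigl(\text{expected number of tracers that hit }\o\bigr)\times\bigl(\text{expected visits per hitting tracer}\bigr),
\]
with the second factor equal to $v^{-1}$ and the first equal to the geometric sum; there is no further ``clock'' correction.
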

\begin{proof}
Since the tracers are either tagging a particle or an empty site, their trajectories are time changes of biased random walks. Hence, using display \eqref{eq:backtracking}, we find that
\begin{align*}
\E[\#\{\text{tracers starting in }[1,\infty)\text{ which visit }\boldsymbol{o}\}]\leq\sum_{i=1}^\infty\E[\eta_0(i)]\left(\frac{q}{p}\right)^i=\frac{q}{2p}.
\end{align*}
On the other hand, using the same tools, we get that the expected number of visits to $\boldsymbol o$ of each one of the tracers which visit $\boldsymbol o$ equals $v^{-1}$
Hence we have that
\begin{equation*}
\E[E_2^{\epsilon}]\leq v^{-1}\frac{q}{2p}.
\qedhere
\end{equation*}
\end{proof}
We have obtained the desired lower and upper bounds. We are ready to prove the following lemma.
\begin{lemma}
\label{lem:Pconvergence}
For all $t\geq0$, we have that
$$\epsilon^{1/2}C^{\epsilon}_{\epsilon^{-1}t}\stackrel{P}{\to}\max\{\sigma B_s:s\leq vt\}\quad\text{as }\epsilon\to0,$$
where $\stackrel{P}{\to}$ denotes convergence in probability and $\sigma$ is as in Theorem~\ref{thm:scalinglimit}.
\end{lemma}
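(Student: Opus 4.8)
The plan is to sandwich $\epsilon^{1/2}C^{\epsilon}_{\epsilon^{-1}t}$ between the lower bound \eqref{eq:lowerbound} and the upper bound \eqref{eq:upperbound2}, and then pass to the limit $\epsilon\to 0$. On the event $\B_\epsilon\cap\mathcal{D}_\epsilon$ those two displays give
\[
S^{\epsilon}_{j^\epsilon}-E_1^\epsilon\ \leq\ C^{\epsilon}_{\epsilon^{-1}t}\ \leq\ S^{\epsilon}_{j^\epsilon}+E_2^\epsilon,
\]
while by \eqref{eq:neglectB} and \eqref{eq:neglectD} we have $\Pb[\B_\epsilon\cap\mathcal{D}_\epsilon]\to 1$. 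Multiplying through by $\epsilon^{1/2}$, it therefore suffices to establish the three facts: (i) $\epsilon^{1/2}E_1^\epsilon\to 0$ in probability; (ii) $\epsilon^{1/2}E_2^\epsilon\to 0$ in probability; (iii) $\epsilon^{1/2}S^{\epsilon}_{j^\epsilon}\to\max\{\sigma B_s:s\leq vt\}$ almost surely.

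Items (i) and (ii) are immediate consequences of the negligibility lemmas already proved. For (i), Lemma~\ref{lem:neglectE} gives $\Pb[E_1^\epsilon\geq\alpha\log(p/q)^{-1}\log(\epsilon^{-1})]\to 0$ for any $\alpha>\log(p/q)^{-1}$ (more precisely $\Pb[E_1^\epsilon\geq\alpha\log(\epsilon^{-1})]\to0$), and since $\epsilon^{1/2}\log(\epsilon^{-1})\to 0$ the product tends to $0$ in probability. For (ii), Lemma~\ref{lem:neglectE2} bounds $\E[E_2^\epsilon]$ by the $\epsilon$-independent constant $v^{-1}q/(2p)$, so Markov's inequality gives $\Pb[\epsilon^{1/2}E_2^\epsilon>\delta]\leq\delta^{-1}\epsilon^{1/2}v^{-1}q/(2p)\to 0$ for every $\delta>0$.

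For (iii), recall that $j^\epsilon$ is chosen so that $S^{\epsilon}_{j^\epsilon}=\max\{S^{\epsilon}_s:0\leq s\leq v\epsilon^{-1}t\}$. After the time change $s=\epsilon^{-1}u$ this reads $\sigma^{-1}\epsilon^{1/2}S^{\epsilon}_{j^\epsilon}=\max\{\sigma^{-1}\epsilon^{1/2}S^{\epsilon}_{\epsilon^{-1}u}:0\leq u\leq v_\epsilon\}$, where $v_\epsilon:=\epsilon\lfloor v\epsilon^{-1}t\rfloor\to vt$ and one uses that $S^\epsilon$ is piecewise constant, so that the supremum over the real interval coincides with the supremum over the corresponding grid of the rescaled process. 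By the uniform-on-compacts convergence \eqref{eq:convergenceofprofiles} together with the continuity of the Brownian path, the right-hand side converges almost surely to $\max\{B_u:0\leq u\leq vt\}$; multiplying by $\sigma>0$ gives (iii). Combining (i)--(iii) with the sandwich above and $\Pb[\B_\epsilon\cap\mathcal{D}_\epsilon]\to 1$ proves the lemma.

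The main (and essentially only) delicate point is step (iii): one needs the invariance principle in \eqref{eq:convergenceofprofiles} to hold uniformly on the compact interval $[0,vt]$, so that the running maxima of the rescaled walks converge to the running maximum of $B$, and one must confirm that the discretization error coming from the floor in the definition of $S^\epsilon$ and from $v_\epsilon\neq vt$ amounts to $\epsilon^{1/2}$ times an $O(1)$ quantity and hence vanishes. Both are routine, so the argument reduces to bookkeeping around the two estimates of Lemmas~\ref{lem:neglectE} and~\ref{lem:neglectE2}.
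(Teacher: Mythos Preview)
Your proof is correct and follows essentially the same approach as the paper's: sandwich $C^{\epsilon}_{\epsilon^{-1}t}$ between $S^{\epsilon}_{j^\epsilon}-E_1^\epsilon$ and $S^{\epsilon}_{j^\epsilon}+E_2^\epsilon$ on $\B_\epsilon\cap\mathcal{D}_\epsilon$, kill the error terms via Lemmas~\ref{lem:neglectE} and~\ref{lem:neglectE2}, and pass to the limit using \eqref{eq:convergenceofprofiles}. The only cosmetic difference is in step~(iii): the paper invokes the already-established fact $\epsilon j^\epsilon\to t^\ast$ and then reads off $\epsilon^{1/2}S^\epsilon_{j^\epsilon}\to\sigma B_{t^\ast}$ from the uniform convergence, whereas you identify $\epsilon^{1/2}S^\epsilon_{j^\epsilon}$ directly as the running maximum of the rescaled walk and apply continuity of the max functional under uniform convergence---these are two phrasings of the same observation.
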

\begin{proof}
   First note that, by displays \eqref{eq:lowerbound} and \eqref{eq:upperbound2}, on event $\B_\epsilon\cap\mathcal{D}_\epsilon$ we have that
  \begin{align*}
 S^\epsilon_{j^\epsilon}-E_1^\epsilon\leq C^{\epsilon}_{\epsilon^{-1}t}\leq S^{\epsilon}_{j^{\epsilon}} + E_2^\epsilon.
  \end{align*}
  Hence, using Lemmas~\ref{lem:neglectE} and~\ref{lem:neglectE2} we get that, for any $\delta>0$
  \begin{align*}
  \Pb[|\epsilon^{1/2}C^\epsilon_{\epsilon^{-1}t}-\epsilon^{1/2}S^{\epsilon}_{j^{\epsilon}}|\geq\delta\big\vert\B_\epsilon\cap\mathcal{D}_\epsilon]\stackrel{\epsilon\to0}{\to}0.
  \end{align*}
  By the display above and displays \eqref{eq:neglectB} and \eqref{eq:neglectD}, we get that
  \begin{align*}
  \Pb[|\epsilon^{1/2}C^\epsilon_{\epsilon^{-1}t}-\epsilon^{1/2}S^{\epsilon}_{j^{\epsilon}}|\geq\delta]\stackrel{\epsilon\to0}{\to}0.
  \end{align*}
  Furthermore, recalling the fact that $\epsilon j^\epsilon\stackrel{\epsilon\to0}{\to}t^\ast$ and display \eqref{eq:convergenceofprofiles}, we get that
  \begin{align*}
  \Pb[|\epsilon^{1/2}C^\epsilon_{\epsilon^{-1}t}-\sigma B_{t^\ast}|\geq\delta]\stackrel{\epsilon\to0}{\to}0.
  \end{align*}
  That is, $\epsilon^{1/2}C^\epsilon_{\epsilon^{-1}t}$ converges in probability to $B_{t^\ast}=\max\{B_s:s\leq vt\}$.
  \end{proof}
Using the previous lemma we now show Theorem~\ref{thm:scalinglimit}.Let $l\in\N$ and $0\leq t_1\leq t_2 \leq \dots \leq t_l\leq\infty$.
Applying Lemma~\ref{lem:Pconvergence} at times $v^{-1}t_i,i=1,\dots,l$ we get that
\begin{align}\label{eq:finitedimensional}
(\sigma^{-1}\epsilon^{1/2}C^{\epsilon}_{v^{-1}\epsilon^{-1} t_1}\!,\dots,\!\sigma^{-1}\epsilon^{1/2}C^{\epsilon}_{v^{-1}\epsilon^{-1} t_l})
\stackrel{P}{\to}(\max\{\!B_s:s\leq t_1\!\}\!,\dots,\!\max\{\!B_s:s\leq t_l\!\}),
\end{align}
as $\epsilon\to0$.
Since convergence in probability implies convergence in distribution and, for each $\epsilon>0$, $(C^{\epsilon}_t)_{t\geq0}$ is distributed as  $(C_t)_{t\geq0}$, we have that display \eqref{eq:finitedimensional} implies the convergence of the finite-dimensional distributions of $(\sigma^{-1}\epsilon^{1/2}C_{v^{-1}\epsilon^{-1} t})_{t\geq0}$ to those of $(\max\{B_s:s\leq t\})_{t\geq0}$.
On the other hand, the function $t \mapsto C_t$ is monotone, hence convergence of finite-dimensional  distributions implies convergence in $(D[0,\infty),M_1)$ (see \cite[Theorem 12.12.3]{whitt-02}).
This finishes de proof of the theorem.

The above theorem provides a rather complete description of the large-scale behavior of the model at the critical density.
Recalling that $\tilde{B}$ can also be expressed as the inverse of an $\alpha$-stable subordinator, with $\alpha=\frac{1}{2}$, we see that our result also provides scaling exponents.

We expect the same scaling limit for the flow of the asymmetric ARW with $\lambda=\infty$ and $\mu=1$.
Nevertheless, for the asymmetric ARW with $\lambda<\infty$ at criticality, we expect an intermittency between periods of inactivity followed by bursts of high activity.
That is, the system should display an ``avalanche"-type of relaxation after periods of load of particles.
This should be reflected in a discontinuous scaling limit of the flow process, with discontinuities corresponding to avalanches.
Since $\tilde{B}$ is continuous, we expect a different scaling limit in that case.

\section*{Acknowledgments}

We thank C.~Hoffman, R.~Dickman, and G.~Kozma for inspiring discussions.
The research reported in this paper was supported in part by US NSF grant DMS-1007626, IMPA, CONICET, ANPCyT, and Brazilian grant CNPq-PDJ 150897/2012-0.

\bibliographystyle{bib/rollaalphasiam}
\addcontentsline{toc}{section}{References}
\bibliography{bib/leo}

\end{document}